\documentclass[11pt]{amsart}
\usepackage[T1]{fontenc}
\usepackage[utf8]{inputenc}
\usepackage{lmodern,microtype}
\usepackage{amsmath,amssymb,mathtools}
\usepackage{booktabs,array,enumitem,needspace}
\usepackage{xurl,xcolor}
\usepackage[letterpaper,margin=1.02in]{geometry}
\usepackage{hyperref}
\hypersetup{colorlinks=true,linkcolor=blue!45!black,citecolor=blue!45!black,urlcolor=blue!45!black,pdftitle={Sectional category versus module sectional category for retractive maps},pdfauthor={Paul-Eugène Parent}}
\allowdisplaybreaks[2]
\newcommand{\Q}{\mathbb Q}
\newcommand{\secat}{\operatorname{secat}}
\newcommand{\msecat}{\operatorname{msecat}}
\newcommand{\id}{\mathrm{id}}

\newcommand{\Span}{\operatorname{span}_{\Q}}
\newcommand{\Ob}{\mathcal O}
\newcommand{\Cc}{\mathcal C}
\newcommand{\Ahat}{\widehat A}
\newcommand{\Khat}{\widehat K}
\newcommand{\Phat}{\widehat P}
\newcommand{\Rhat}{\widehat R}
\newcommand{\m}{\mathfrak m}
\DeclareRobustCommand{\fundingmark}{\textsuperscript{*}}
\newtheorem{theorem}{Theorem}[section]
\newtheorem{proposition}[theorem]{Proposition}
\newtheorem{lemma}[theorem]{Lemma}
\newtheorem{corollary}[theorem]{Corollary}
\theoremstyle{definition}

\theoremstyle{remark}

\title[Sectional category for retractive maps]{Sectional category versus module sectional category for retractive maps}
\author[Paul-Eugène Parent]{Paul-Eugène Parent\fundingmark}
\thanks{\makebox[0pt][r]{\fundingmark\enspace}This research was supported by the Natural Sciences and Engineering Research Council of Canada (NSERC), grant no.~149521.}
\address{Department of Mathematics and Statistics, University of Ottawa, Ottawa, Canada}
\email{pparent@uottawa.ca}
\date{September 7, 2026}
\subjclass[2020]{55M30, 55P62, 13D02}
\keywords{Sectional category, module sectional category, rational homotopy theory, Sullivan model, homotopy retraction}
\AtEndDocument{
\bigskip
\noindent\textbf{Declaration on the use of generative artificial intelligence.}
During the preparation of this manuscript, the author used ChatGPT
(OpenAI) as an assistive tool for language editing, \LaTeX{} preparation,
and the development and review of mathematical arguments. The author
assumes full responsibility for all mathematical statements, proofs,
references, conclusions, and the content of the article.
}
\begin{document}
\begin{abstract}
We construct a map between simply connected rational spaces of finite rational type which admits a homotopy retraction and satisfies $\msecat(p)=2<\secat(p)=3$. The target is the rationalization of a simply connected twelve-dimensional CW complex with eleven cells, and is rationally hyperbolic. This disproves Carrasquel--Vera's conjecture for retractive maps.
\end{abstract}
\maketitle

\section{Introduction}\label{sec:introduction}
The normalized sectional category $\secat(p)$ of a map $p:X\to Y$ is the least integer $n$ for which $Y$ is covered by $n+1$ open sets admitting homotopy local sections. A global homotopy section therefore gives sectional category zero. This invariant originates in the work of Schwarz \cite{Schwarz1966}.

Its rational module approximation $\msecat(p)$, introduced by Fernández Suárez, Ghienne, Kahl and Vandembroucq \cite{FGKV2006}, replaces multiplicative retractions of fibrewise-join models by retractions of differential modules. Hess's theorem gives equality for rational Lusternik--Schnirelmann category \cite{Hess1991}. For general maps, Stanley's spherical-fibration example gives a strict inequality \cite{Stanley2000}; its module interpretation is explained in \cite[Remark~5.8]{FGKV2006}.

Carrasquel--Vera considered maps whose rationalizations admit homotopy retractions. Here a homotopy retraction of $p:X\to Y$ means a map $r:Y\to X$ such that $rp\simeq\id_X$. Such a retractive map has a split Sullivan representative
\[
 (\Lambda V,d)\longrightarrow
 (\Lambda V\otimes\Lambda W,D)
 \xrightarrow{\varphi}(\Lambda V,d),
 \qquad \varphi|_{\Lambda V}=\id,\qquad \varphi(W)=0.
\]
Put $A=(\Lambda V\otimes\Lambda W,D)$ and $K=\ker\varphi$. Carrasquel--Vera characterized the two invariants by homotopy retractions of the projections $\rho_n:A\to A/K^{n+1}$. For $\secat$, a retraction is required in the homotopy category of commutative differential graded algebras; for $\msecat$, it is required in the derived category of differential $A$-modules. He conjectured equality in this retractive setting \cite[Theorem~3.3 and Conjecture~3.4]{CV2017}.

The following theorem disproves this conjecture: a homotopy splitting does not suffice to make the module approximation exact. A quadratic compatibility condition required for a multiplicative retraction can remain undetected by the module retraction.

\begin{theorem}\label{thm:main}
There is a map $p:X\to Y$ admitting a homotopy retraction and satisfying
\[
 \msecat(p)=2<\secat(p)=3.
\]
Both spaces are simply connected and rational, of finite rational type. The source has minimal Sullivan model
\[
 (\Lambda(a,b),d),\qquad |a|=6,\quad |b|=17,\qquad
 da=0,\quad db=a^3.
\]
The target has finite-dimensional rational cohomology, with
\[
 H^q(Y;\Q)=0\quad(q>12),\qquad \dim_{\Q}H^{12}(Y;\Q)=2.
\]
\smallskip
\noindent
The target is the rationalization of a simply connected twelve-dimensional CW complex with eleven cells. The space $Y$ is rationally hyperbolic.
\end{theorem}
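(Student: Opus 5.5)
The plan is to build an explicit split Sullivan model and then compute both invariants through Carrasquel--Vera's characterization \cite[Theorem~3.3]{CV2017}. By that result, $\secat(p)\le n$ iff $\rho_n:A\to A/K^{n+1}$ has a homotopy retraction as a CDGA, and $\msecat(p)\le n$ iff $\rho_n$ has a retraction in the derived category of $A$-modules.

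\textbf{Choosing the model.} Take $(\Lambda V,d)=(\Lambda(a,b),d)$ with $db=a^3$. Let $A=(\Lambda(a,b)\otimes\Lambda W,D)$ be a relative Sullivan algebra with $\varphi(W)=0$, so $K$ is the ideal generated by $W$ and the $D$-differentials involving $W$. The map $p$ is the realization of $A\to\Lambda(a,b)$, and the retraction is the realization of the inclusion $\Lambda(a,b)\to A$. I would design $W$ to be small and quadratic/cubic, so that $A$ is quasi-isomorphic to a finite-dimensional CDGA $B$ with top degree $12$ and $\dim B^{12}$ cohomology equal to $2$. Since $a^3$ is exact in $\Lambda(a,b)$, it is natural to let $W$ contain generators $x$ with $Dx$ involving $a\cdot(\text{something in }W)$. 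This kills the classes of high degree, so that only $a$, $a^2$ and a few mixed products survive up to degree $12$. The eleven cells then come from a minimal cell structure realizing $H^*(B)$, which should have total dimension $11$. Rational hyperbolicity would be proved by exhibiting a free subalgebra in $\pi_*(\Omega Y)\otimes\Q$, or by noting that $H^*(Y)$ fails Poincaré duality / elliptic formal dimension constraints (for instance $\dim H^{12}=2$ with a top degree that is not one-dimensional).

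\textbf{Upper bounds.} For $\msecat(p)\le 2$, I would write down an explicit $A$-module map $A/K^3\to A$, or a semifree replacement, and check that it retracts $\rho_2$ on homology. Because $K^3$ is concentrated in high degrees, with $K^3$ living in degrees $\ge$ roughly $12$, this is a finite linear algebra check. For $\secat(p)\le 3$, I would show $K^4$ is acyclic in the relevant range, or that the quotient $A\to A/K^4$ is a quasi-isomorphism modulo degrees above $12$. The target $Y$ is $12$-dimensional, so $\secat\le\operatorname{cat}$-type bounds together with the explicit CDGA section give this bound.

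\textbf{Lower bounds.} For $\msecat(p)\ge 2$, use the cohomological lower bound: a nonzero product of two classes in $\ker H(\varphi)$ forces $\msecat\ge2$. The design should provide two such classes with nonzero product in $H^{12}$.

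The main obstacle is $\secat(p)>2$, i.e.\ showing that no CDGA homotopy retraction $A/K^3\to A$ of $\rho_2$ exists even though a module retraction does. My first attempt is to argue on a minimal model of $A/K^3$. A CDGA retraction $r$ is determined on generators, and the module retraction fixes $r$ up to homotopy in low degrees. Multiplicativity then forces $r(u\cdot v)=r(u)r(v)$ for a specific pair of degree-$6$ classes $u,v$ in $K/K^3$. This yields a quadratic equation in the finitely many free parameters (homotopy choices), the ``quadratic compatibility condition'' mentioned in the introduction. I would show this equation has no rational solution by reducing it to the nonvanishing of a Massey-type obstruction in $H^{12}(A)$, using that $\dim H^{12}=2$ leaves room for the obstruction class to be independent of the cup product class. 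Careful bookkeeping of all homotopies in degrees $\le12$ is where I expect the real work.
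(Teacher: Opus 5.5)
Your proposal outlines a strategy, but it leaves a genuine gap exactly where the theorem has content: no model $A$ is ever constructed, and your design heuristics point into a regime where the separation provably cannot occur.

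First, $W$ cannot be ``small''. A simply connected Sullivan algebra with finitely many generators and finite-dimensional cohomology is elliptic, so it has Poincar\'e duality. That contradicts $\dim H^{12}=2$ in top degree, which is the very fact you invoke for hyperbolicity. The paper's $\Ahat$ has infinitely many relative generators. They are built degree by degree, simultaneously with a semifree resolution $\Phat\to\Khat^3$ and a nullhomotopy $h$ (Lemma~\ref{lem:extension}).

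Second, and decisively, you want $K^3$ to live in degrees around $12$ and above so that the module retraction becomes a finite check. Suppose $K^3$ vanishes below degree $m$. Then $\rho_2$ has a relative Sullivan model $\Ahat\hookrightarrow\Ahat\otimes\Lambda U$ with $U^{<m-1}=0$. If $2(m-1)>12$, then since $H^{>12}=0$, a module retraction becomes multiplicative after composing with a truncation of $\Ahat$ above degree $12$, and it lifts to an algebra retraction (Lemma~\ref{lem:module-algebra-range}). So in your regime $\msecat(p)\le2$ already forces $\secat(p)\le2$. A separation at level two requires $K^3\neq0$ in degree at most $7$, hence relative generators of degree $2$. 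This is why the paper starts from closed $x,y$ of degree $2$, with $Dt_i=x^{3-i}y^i$ and $Du_i=yt_i-xt_{i+1}$: the test generators $E_0,E_1,F$ then have degrees $5,5,6$, and $F^2$ lands in degree $12$.

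The same degree lemma proves $\secat(p)\le3$. Apply it to $\rho_3$: there $\Khat^4$ vanishes below degree $8$, so $U^{<7}=0$ and $14>12$. The module retraction is inherited from $\rho_2$ via $\pi\rho_3=\rho_2$. Your appeal to $\operatorname{cat}$-type bounds gives only $\secat(p)\le\operatorname{cat}(Y)\le 6$.

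Your sketch of $\secat(p)>2$ has the right instinct, that the obstruction comes from squaring degree-six elements into degree twelve. The mechanism, however, is neither a single quadratic equation without rational solutions nor the nonvanishing of one Massey-type class.

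In the paper, the module nullhomotopy is allowed a base component, $h(f_0)=-u_0+a$. The functional $\ell=[u_0^2]^*+[au_0]^*$ is chosen to annihilate $h(u_0f_0)=-u_0^2+au_0$. One must then prove (Proposition~\ref{prop:double-annihilation}) that every degree-twelve module obstruction lies in $\ker\ell\cap\ker H^{12}(\varphi)$, and kill only that subspace. This way both $[a^2]$ and $[\Omega]$ survive while the resolution of $\Khat^3$ is completed.

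For an algebra retraction $f$ of the seven-generator test algebra $T$, degree reasons force $f(F)=u_0+\alpha a+P_3$ as cochains. Applying $f$ to $d_TQ=-\tfrac12F^2-yH_1+xH_0$ then gives two conditions:
\begin{itemize}
\item $\ell$ yields $1+2\alpha=0$, coming from the cross term $2\alpha\,au_0$;
\item $\widehat\varphi$ yields $\alpha^2a^2=0$ as a cochain, because $B^{11}=0$.
\end{itemize}
A module homotopy can set the $au_0$- and $a^2$-contributions independently, whereas a multiplicative map ties them together through a square. Finding this linear-versus-quadratic incompatibility, and arranging that every module obstruction avoids both protected functionals, is the substance of the proof, and it is absent from your plan.

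Your lower bound $\msecat(p)\ge2$ from a nonzero product of kernel classes is correct. In the paper it is $[x]^2\neq0$ in degree $4$, not a product in $H^{12}$.
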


The distinction between the two retractions is expressed by a quadratic compatibility condition on a primitive. We construct a semifree resolution of $K^3$ and a module nullhomotopy of its inclusion into the total Sullivan algebra. This proves the upper bound $\msecat(p)\le2$. The module homotopy is permitted to have a component in the base algebra. A coefficient functional is chosen so that all module obstructions can be removed while a square in the base remains nonzero.

A multiplicative retraction is subject to an additional requirement: the image of a square must be the square of a single image. A relative Sullivan algebra with seven generators detects this requirement. One of its differential equations, evaluated by the coefficient functional, forces the base component of a primitive to be $-a/2$. Applying the split projection to the same equation forces that component to vanish. These two conclusions exclude every multiplicative retraction.

The construction is organized around a simultaneous extension lemma for Sullivan algebras and semifree modules. An elementary resolution of $(x,y)^3$ supplies the initial module, and degree considerations identify all terms relevant to the quadratic obstruction. After one protected extension in degree twelve, the construction continues uniformly in all higher degrees.

The target also has an explicit finite CW representative with eleven cells, including the zero-cell. Starting with the four-skeleton of $\mathbb{CP}^2\times\mathbb{CP}^2$, one adds a six-dimensional sphere and the two eight-dimensional product cells. Two twelve-dimensional cells are then attached rationally by the half Whitehead squares of $c$ and $c+\lambda$, where $c$ represents the six-dimensional sphere and $\lambda$ is a specified Whitehead product in the four-skeleton. Appendix~\ref{app:cellular} derives these attachments from a finite-dimensional quotient of the constructed Sullivan model and displays the inclusion and projection that realize the splitting in Quillen models.

Throughout, $Y$ denotes the rationalization of this finite CW complex. Its hyperbolicity forces infinitely many generators in every minimal Sullivan model, although its minimal Quillen model has ten generators. Finite rational type means finite dimensionality of each rational cohomology group; the construction uses a degreewise finite Sullivan algebra.

Section~\ref{sec:models} recalls the algebraic retraction criteria. Section~\ref{sec:attachment} proves the extension lemma. The initial system and its low-degree extensions are constructed in Sections~\ref{sec:initial} and~\ref{sec:detector}. Section~\ref{sec:completion} completes the model and establishes the module value. Section~\ref{sec:multiplication} proves the multiplicative obstruction and realizes the required map. Appendix~\ref{app:exact-value} determines the exact sectional category using a general lemma on module and algebra retractions. Appendix~\ref{app:cellular} gives the finite quotient, the split Quillen model, and the eleven-cell representative.

\section{Sullivan models and module retractions}\label{sec:models}
We work over $\Q$, with cohomological differentials of degree one. Our conventions for Sullivan algebras and differential modules are those of Félix, Halperin and Thomas \cite{FHT2001}. For a graded vector space $V$, the notation $\Lambda V$ denotes the free graded-commutative algebra on $V$. Thus even generators are polynomial and odd generators are exterior. We write $\Lambda^jV$ for words of length $j$; the superscript in $(\Lambda V)^q$ instead denotes cohomological degree.

A relative Sullivan algebra is an inclusion
\[
 (\Lambda V,d)\hookrightarrow(\Lambda V\otimes\Lambda W,D)
\]
with an ordered basis of $W$ whose differentials use only earlier relative generators and elements of $\Lambda V$. It is split if the projection $\varphi$ which fixes $\Lambda V$ and annihilates $W$ is a morphism of differential algebras. We abbreviate
\[
 B=(\Lambda V,d),\qquad A=(\Lambda V\otimes\Lambda W,D).
\]
The kernel and its powers are
\begin{equation}\label{eq:kernel}
 K=\Lambda V\otimes\Lambda^+W,\qquad
 K^j=\Lambda V\otimes\Lambda^{\ge j}W.
\end{equation}
In particular, base factors do not contribute to the relative word length.

Throughout the construction of the split model $\Ahat$, the Sullivan algebras are connected, simply connected and of finite type. For algebras and modules, finite type means finite dimensionality of each homogeneous component. The base is minimal, and relative generators are adjoined in nondecreasing cohomological degree. Such a split extension is minimal as an absolute Sullivan algebra: a linear term in $Dw$, for $|w|=n$, would be either a base term, excluded by $\varphi(Dw)=0$, or an earlier relative generator of degree $n+1$, excluded by the degree order.

A semifree differential $A$-module is a module
\[
 P=(A\otimes E,\delta)
\]
with an ordered basis of $E$ such that the differential of each basis element uses only earlier basis elements. We call these basis elements cells. The differential satisfies
\[
 \delta(cp)=D(c)p+(-1)^{|c|}c\,\delta p.
\]
A homogeneous $A$-linear map of degree $-1$ satisfies
\[
 h(cp)=(-1)^{|c|}c\,h(p).
\]
The derived category of differential $A$-modules is denoted by $\mathcal D(A)$.

\begin{lemma}\label{lem:derived}
Let $J$ be a differential ideal in $A$, and let $\varepsilon:P\xrightarrow{\simeq}J$ be a semifree resolution. The projection $A\to A/J$ has a left inverse in $\mathcal D(A)$ if and only if there is an $A$-linear map $h:P\to A$ of degree $-1$ with
\begin{equation}\label{eq:homotopy}
 Dh+h\delta=\iota\varepsilon,
\end{equation}
where $\iota:J\hookrightarrow A$ is the inclusion.
\end{lemma}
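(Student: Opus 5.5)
The plan is to use the short exact sequence $0\to J\xrightarrow{\iota}A\xrightarrow{\pi}A/J\to0$ of differential $A$-modules. It gives a distinguished triangle $J\to A\to A/J\to J[1]$ in $\mathcal D(A)$. The standard triangulated-category fact says that $\pi$ has a left inverse exactly when the connecting morphism vanishes, and that this happens exactly when $\iota$ is zero in $\mathcal D(A)$. So the statement reduces to showing that $\iota\varepsilon:P\to A$ is nullhomotopic as a map of differential modules.

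\emph{Step 1: splitting criterion.} Suppose $\pi$ has a left inverse $s:A/J\to A$ in $\mathcal D(A)$, so that $s\pi=\id_A$. Then $\pi\iota=0$ gives $\iota = s\pi\iota = 0$ in $\mathcal D(A)$.

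For the converse, suppose $\iota=0$ in $\mathcal D(A)$. In the long exact $\operatorname{Hom}_{\mathcal D(A)}(-,A)$ sequence of the triangle, the class $\id_A$ restricts along $\iota$ to zero, so $\id_A$ lies in the image of $\pi^*$. That is exactly a left inverse of $\pi$.

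\emph{Step 2: homotopy description of $\iota=0$.} Since $\varepsilon$ is an isomorphism in $\mathcal D(A)$, the class of $\iota$ is zero if and only if $\iota\varepsilon$ is zero in $\mathcal D(A)$.

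Because $P$ is semifree, it is K-projective. Hence $\operatorname{Hom}_{\mathcal D(A)}(P,A)$ equals the group of $A$-linear chain maps $P\to A$ modulo $A$-linear chain homotopy (FHT, Ch.~6). Therefore $\iota\varepsilon=0$ if and only if there is an $A$-linear $h$ of degree $-1$ with $Dh+h\delta=\iota\varepsilon$. This is exactly \eqref{eq:homotopy}.

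Two sign conventions need checking here. The commutator $Dh+h\delta$ must be $A$-linear for degree $-1$ maps under the stated rule $h(cp)=(-1)^{|c|}c\,h(p)$. The definition of chain homotopy must also match FHT's.

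\emph{Step 3: a more explicit converse (optional).} Given $h$, one can build the retraction directly. Let $C$ be the mapping cone of $\varepsilon$. The map $(\id_A\text{ on }A,\ h\text{ on the shifted }P)$, suitably combined with $\iota\varepsilon$, defines a chain map from the cone of $\iota\varepsilon$ to $A$. Here the cone of $\iota\varepsilon$ is quasi-isomorphic to $A/J$ via $\pi$, because $\varepsilon$ is a quasi-isomorphism and $0\to J\to A\to A/J\to 0$ is exact.

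Concretely, the map is $A\oplus sP\to A$, $(x,sp)\mapsto x - h(p)$ up to sign. The condition \eqref{eq:homotopy} makes it a chain map. It restricts to the identity on $A$, so composing with the inverse of the quasi-isomorphism gives a left inverse of $\pi$.

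The main obstacle is bookkeeping. One must identify $A/J$ with the cone of $\iota\varepsilon$ in $\mathcal D(A)$ compatibly with $\pi$, and keep the signs consistent. Everything else is formal.
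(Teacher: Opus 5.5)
Your proposal is correct and follows the paper's proof. Both use the triangle from $0\to J\to A\to A/J\to0$ to reduce a left inverse of the projection to the vanishing of $\iota$ in $\mathcal D(A)$, and then use semifreeness of $P$ to identify that vanishing with $\iota\varepsilon$ being nullhomotopic, i.e.\ with the existence of $h$ satisfying $Dh+h\delta=\iota\varepsilon$. Your optional explicit retraction through the cone of $\iota\varepsilon$ is not needed, though it is consistent with the paper: with the cone differential $(x,p)\mapsto(Dx+\iota\varepsilon p,-\delta p)$, the map $(x,p)\mapsto x+h(p)$ is a chain map.
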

\begin{proof}
The short exact sequence $0\to J\to A\to A/J\to0$ gives a distinguished triangle in $\mathcal D(A)$. Applying morphisms into $A$, the identity of $A$ lifts across the projection precisely when its composite with $J\to A$ is zero. A morphism from a semifree module represents zero in the derived category precisely when it is cochain nullhomotopic. This is \eqref{eq:homotopy}.
\end{proof}

We use the following form of Carrasquel--Vera's characterization.

\begin{theorem}[{\cite[Theorem~3.3]{CV2017}}]\label{thm:CV}
Let $\varphi:A\to B$ be a split Sullivan representative of a map $p$ whose rationalization $p_0$ admits a homotopy retraction, and put $K=\ker\varphi$. Then:
\begin{enumerate}[label=\textup{(\roman*)}]
\item $\secat(p_0)$ is the least $n$ for which $\rho_n:A\to A/K^{n+1}$ admits a homotopy retraction as a differential algebra;
\item $\msecat(p_0)$ is the least $n$ for which $\rho_n$ admits a homotopy retraction as a differential $A$-module.
\end{enumerate}
\end{theorem}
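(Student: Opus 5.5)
The statement to address is Theorem~\ref{thm:CV}, quoted from \cite{CV2017}. My plan is to derive it from the standard fibrewise-join characterizations of $\secat$ and $\msecat$, specialized to a split representative.

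\textbf{Step 1: the fibrewise join.} For a map $p$ with Sullivan representative $\varphi:A\to B$, the $(n+1)$-fold fibrewise join of $p$ has a Sullivan model. In the rational setting this model is the quotient $A\to A/K^{n+1}$, where $K=\ker\varphi$. This is the algebraic form of the Schwarz join construction, as in \cite{FGKV2006} for surjective models. The point of the split hypothesis is that $\varphi$ is surjective with a section $B\hookrightarrow A$, so $K$ is a differential ideal. It also gives $K^{n+1}=\Lambda V\otimes\Lambda^{\ge n+1}W$ by \eqref{eq:kernel}, and hence a concrete description of the quotient.

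The fact I need here is the Félix--Halperin--Thomas style identification: the fibrewise join of $p$ is modelled by $A/K^{n+1}$, and the projection from the join to $Y$ corresponds to $\rho_n$. I would justify it by induction on $n$. At each stage the join is a homotopy pushout, and such pushouts correspond to homotopy pullbacks of CDGAs. With surjective models, these pullbacks can be computed via the square $A/K^{i}\times_{A/K}A/K\cdots$. The retraction hypothesis keeps every connectivity and finite-type hypothesis intact.

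\textbf{Step 2: (i).} By Schwarz, $\secat(p_0)\le n$ if and only if the $(n+1)$-fold join projection admits a homotopy section. Passing to models, this means $\rho_n$ admits a homotopy retraction in the homotopy category of CDGAs. Item (i) follows directly.

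\textbf{Step 3: (ii).} Here I would use the definition of \cite{FGKV2006}: $\msecat$ is the least $n$ for which the model of the join projection admits a retraction as a morphism of $A$-modules in $\mathcal D(A)$. Combined with Step~1 this is exactly (ii). Lemma~\ref{lem:derived} then recasts the condition as a nullhomotopy of a resolution of $K^{n+1}\to A$.

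\textbf{Main obstacle.} The hardest part is Step~1: proving that the iterated homotopy pullback equals $A/K^{n+1}$, not merely something quasi-isomorphic to it as an algebra. I would first verify $n=1$ explicitly, using $K\cap K=K$ and $K\cdot K=K^2$. The key sub-claim for the induction is that the relevant Mayer--Vietoris sequence is exact, and I expect the splitting to make it split exact.
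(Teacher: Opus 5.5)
The paper does not prove this statement; it imports it from \cite[Theorem~3.3]{CV2017}. Judged on its own, your argument fails at Step~1, and the failure is not technical: $\rho_n$ is in general \emph{not} a model of the $(n+1)$-fold fibrewise join map, even for split representatives. Take $p:\ast\to S^2$, which has a homotopy retraction. Its split representative is $B=\Q$, $A=(\Lambda(x,y),d)$ with $|x|=2$, $|y|=3$, $dy=x^2$, and $K=\Lambda^+(x,y)$. Then $A/K^2=\Span(1,x,y)$ with zero differential, which is a model of $S^2\vee S^3$. The two-fold fibrewise join of $p$, however, is the Ganea space $G_1(S^2)\simeq\Sigma\Omega S^2$, whose rational cohomology is nonzero in every degree $\ge2$. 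So the ``main obstacle'' you describe is a false statement. Even the Félix--Halperin case $B=\Q$ cannot be proved by identifying $\rho_n$ with the join.

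The proposed induction also cannot produce $K^{i+1}$. The strict pullback $A/K^i\times_{A/K}A/K$ is just $A/K^i$; in general $A/I\times_{A/(I+J)}A/J\cong A/(I\cap J)$. The correct homotopy pullback must be taken over a model of the fibre product, namely the derived tensor product $A/K^i\otimes^{L}_A A/K$. This is not $A/K$: in the example with $i=1$ it models $\ast\times_{S^2}\ast=\Omega S^2$, not a point. Its extra cohomology is exactly what $A/K^{n+1}$ does not see.

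Your surrounding remarks point to the missing idea. The kernel of any morphism of differential algebras is a differential ideal, so that is not what splitting buys. You also use the homotopy retraction of $p$ only for bookkeeping, although it is precisely the hypothesis under which \cite{CV2017} obtains the characterization.

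What a proof needs is weaker than an identification. It suffices to show that $\rho_n$ and a model $A\to\mathcal J_n$ of the join map admit retractions simultaneously. One way is to construct morphisms under $A$, in the homotopy category, in both directions: from $A/K^{n+1}$ to $\mathcal J_n$ and from $\mathcal J_n$ to $A/K^{n+1}$. Composing with a retraction of one map then gives a retraction of the other. If these comparisons are made strictly under $A$ (after replacing $\rho_n$ by a relative Sullivan model), they are also $A$-module maps. Schwarz's theorem and the definition in \cite{FGKV2006} would then yield (i) and (ii) together. Establishing this simultaneous existence of retractions is the real content of Carrasquel--Vera's theorem, and that is where the homotopy retraction of $p$ has to be used essentially.
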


For a map with Sullivan source, a homotopy retraction can be represented by a strict retraction of a relative Sullivan model, as in \cite[p.~806]{CV2017}. To recall the argument, let $i:A\hookrightarrow C$ be a relative Sullivan inclusion, let $g:C\to A$ be a morphism, and let
\[
 H:A\longrightarrow A\otimes\Lambda(t,dt),\qquad |t|=0,\quad |dt|=1,
\]
be a Sullivan homotopy from $gi$ to $\id_A$. The evaluation $\operatorname{ev}_0:A\otimes\Lambda(t,dt)\to A$ is a surjective quasi-isomorphism. The lifting property of $i$ therefore gives
\[
 \widetilde H:C\longrightarrow A\otimes\Lambda(t,dt),\qquad
 \widetilde H i=H,\qquad \operatorname{ev}_0\widetilde H=g.
\]
It follows that $\operatorname{ev}_1\widetilde H$ fixes $A$ strictly; see also \cite[Sections~12 and~14]{FHT2001}. We use this only for a hypothetical retraction of $\rho_2$.

\section{Simultaneous extensions}\label{sec:attachment}
The module $P$ records generators of the ideal $K^3$, their relations, and higher relations. Its augmentation is a resolution precisely when the augmentation cone is acyclic. A nonzero cone cohomology class records a relation still to be resolved. The map $\Ob$ below measures the obstruction to resolving such a class while extending the chosen module homotopy. The extension lemma removes this obstruction by adjoining both algebra generators and module cells.

Suppose $\varepsilon:(P,\delta)\to(K^3,D)$ is a semifree module map, and $h:P\to A$ satisfies \eqref{eq:homotopy}. We suppress the inclusion $\iota$ in that identity. Define the augmentation cone by
\begin{equation}\label{eq:cone}
 \Cc(P)^q=(K^3)^q\oplus P^{q+1},\qquad
 \partial(j,p)=(Dj+\varepsilon p,-\delta p).
\end{equation}
The augmentation is a quasi-isomorphism if and only if this cone is acyclic. The map
\begin{equation}\label{eq:obstruction}
 \Ob:\Cc(P)\longrightarrow A,\qquad \Ob(j,p)=-j-h(p),
\end{equation}
is a cochain map, since
\[
 D\Ob(j,p)=-Dj-\varepsilon p+h\delta p
          =\Ob(Dj+\varepsilon p,-\delta p).
\]

\begin{lemma}[Simultaneous extension]\label{lem:extension}
Let $q\ge9$. Suppose $B\to A\to B$ is a minimal split Sullivan algebra of finite type, with $A^0=\Q$, $A^1=0$, and relative generators of degrees between $2$ and $q-2$. Suppose $P$ has finitely many cells, all of degrees between $6$ and $q-1$, and
\[
 H^j(\Cc(P))=0\quad(j<q),\qquad Dh+h\delta=\varepsilon.
\]
Let $\mathcal U$ be a vector subspace of $H^q(A)$ containing the image of $H^q(\Ob)$, and suppose every class of $\mathcal U$ has a cocycle representative in $K$.

One can adjoin relative algebra generators of degree $q-1$ and module cells of degree $q$ so that the splitting and all structural identities extend, the new cone is acyclic through degree $q$, and
\[
 H^j(A)\xrightarrow{\cong}H^j(A')\quad(j<q),\qquad
 H^q(A')\cong H^q(A)/\mathcal U.
\]
The enlarged algebra remains minimal and the enlarged module remains semifree.
\end{lemma}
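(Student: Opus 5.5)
The plan is to adjoin the algebra generators first, using $\mathcal U$, and then the module cells, one for each cone class in degree $q$, with the algebra extension supplying the primitives needed to extend $h$.

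\emph{Step 1: algebra generators.} Fix a basis of $\mathcal U$ and, by hypothesis, cocycle representatives $u_1,\dots,u_m\in K^q$. Put $A'=A\otimes\Lambda(w_1,\dots,w_m)$ with $|w_i|=q-1$ and $Dw_i=u_i$. Extend $\varphi$ by $\varphi(w_i)=0$. Since $u_i\in K=\ker\varphi$, this is again a split relative Sullivan algebra; generators are adjoined in nondecreasing degree because existing relative generators have degree $\le q-2$.

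For minimality I would argue exactly as in Section~\ref{sec:models}. A linear term of $u_i$ would be a relative generator of degree $q$, and there is none; a base linear term is excluded because $\varphi(u_i)=0$.

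For cohomology, note that $A'^j=A^j$ for $j<q-1$ and $A'^{q-1}=A^{q-1}\oplus\Span(w_i)$. Since $A^1=0$, every product of some $w_i$ with an element of positive degree lies in degree $\ge q+1$. Hence $A'^q=A^q$. A cocycle $\alpha+\sum c_iw_i$ in degree $q-1$ forces $\sum c_iu_i=-D\alpha$ to be exact, and so all $c_i=0$ by independence of the classes $[u_i]$. This gives $H^j(A)\cong H^j(A')$ for $j<q$ and $H^q(A')=H^q(A)/\mathcal U$.

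\emph{Step 2: degree bookkeeping for the cone.} I expect this to be the main point to verify carefully: that enlarging $A$ to $A'$ does not alter the cone $\Cc(P)$ (with $P$ replaced by $A'\otimes_AP$) in degrees $\le q+1$.

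First, an element of $K'^3$ involving some $w_i$ needs at least two further relative factors, or one $w_i$ times $K^2$. Since $K^2$ starts in degree $4$, such elements have degree $\ge q+3$. Second, $w_i$ times a cell has degree $\ge q-1+6$. Hence the cone over $A'$ coincides with $\Cc(P)$ through degree $q+1$. In particular it is still acyclic below $q$, and its $H^q$ equals the old $H^q(\Cc(P))$, which is finite dimensional by finite type.

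\emph{Step 3: module cells.} Choose cocycles $(j_k,p_k)\in\Cc(P)^q$ representing a basis of $H^q(\Cc(P))$; thus $Dj_k+\varepsilon p_k=0$ and $\delta p_k=0$. Adjoin cells $e_k$ of degree $q$ with
\[
\delta e_k=-p_k,\qquad \varepsilon e_k=j_k .
\]
Then $\varepsilon\delta e_k=-\varepsilon p_k=Dj_k=D\varepsilon e_k$, so $\varepsilon$ remains a chain map, and $P'$ remains semifree because $p_k$ involves only earlier cells. Moreover $\partial(0,e_k)=(j_k,p_k)$, so the chosen classes die.

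To extend $h$ we need $h(e_k)=\beta_k$ with $D\beta_k=\varepsilon e_k-h\delta e_k=j_k+h(p_k)=-\Ob(j_k,p_k)$. Because the image of $H^q(\Ob)$ lies in $\mathcal U$, we can write $\Ob(j_k,p_k)=\sum c_{ki}u_i+D\gamma_k$. Then $\beta_k=-\sum c_{ki}w_i-\gamma_k$ works in $A'$. This is exactly why the algebra generators must be adjoined simultaneously with the cells.

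Finally, no new cone cohomology appears in degrees $\le q$. The new cells lie in $\Cc^{q-1}$, and a combination $\sum a_ke_k+(\text{old})$ that is a cocycle would make $\sum a_k[(j_k,p_k)]=0$, forcing $a_k=0$. In degree $q$ every old cocycle is now a boundary. Thus the new cone is acyclic through degree $q$, and all structural identities ($\varphi$ a dga map, $\varepsilon$ a chain map, $Dh+h\delta=\varepsilon$) hold on $P'$.
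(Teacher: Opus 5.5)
Your proposal is correct and follows essentially the same route as the paper: kill a basis of $\mathcal U$ by generators of degree $q-1$ whose differentials lie in $K$, check by degree counting that scalar extension leaves the cone unchanged around degree $q$, and attach one degree-$q$ cell per cone class, using the new generators to supply the primitive of $\Ob$ that defines $h$ on that cell. The differences are cosmetic: your new cells carry the opposite sign convention, and you compute the algebra and cone cohomology directly degree by degree, where the paper reads them off from the connecting maps of the two quotient sequences.
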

\begin{proof}
Choose cocycles $w_1,\ldots,w_t\in K^q$ representing a basis of $\mathcal U$, and set
\[
 A'=A\otimes\Lambda(z_1,\ldots,z_t),\qquad
 |z_a|=q-1,\quad Dz_a=w_a,\quad\varphi(z_a)=0.
\]
The differentials are decomposable: they have no pure base component, and there is no old relative generator of degree $q$.

In degrees at most $q$, the quotient complex $A'/A$ is concentrated in degree $q-1$, on the classes of the $z_a$. There is no degree-$q$ term because $A^1=0$. Its connecting map sends $z_a$ to $[w_a]$ and is injective. The cohomology exact sequence gives the asserted preservation below $q$ and the quotient in degree $q$.

Next choose cone cycles $\gamma_i=(j_i,p_i)$ representing a basis of $H^q(\Cc(P))$. This is a second, independent choice of basis: the $[w_a]$ prescribe the algebra cohomology to be killed, whereas the $[\gamma_i]$ prescribe the remaining relations in the module resolution. Each $\Ob(\gamma_i)$ now has a primitive $v_i\in A'^{q-1}$. Extend scalars from $A$ to $A'$, and adjoin cells $g_i$ of degree $q$ with
\begin{equation}\label{eq:cell-extension}
 \delta g_i=p_i,\qquad \varepsilon(g_i)=-j_i,\qquad h(g_i)=v_i,
 \qquad Dv_i=-j_i-h(p_i).
\end{equation}
The cone-cycle equations $\delta p_i=0$ and $Dj_i+\varepsilon p_i=0$ give
\[
 \delta^2g_i=0,\qquad
 D\varepsilon(g_i)=\varepsilon\delta(g_i),\qquad
 Dh(g_i)+h\delta(g_i)=\varepsilon(g_i).
\]
The old cells precede the new ones, so the module is semifree.

A new monomial in $(\ker(A'\to B))^3$ involving a $z_a$ has degree at least $(q-1)+2+2=q+3$. A new scalar multiple of an old cell has cone degree at least $(q-1)+6-1=q+4$. Thus scalar extension does not alter the cone in the degrees needed to compute $H^q$.

The quotient of the enlarged cone by the old cone has its first new generators in degree $q-1$, represented by the cells $g_i$. This second connecting map has values $-[\gamma_i]$, a basis of the old cone cohomology in degree $q$. The first connecting map controlled algebra cohomology; this one controls exactness of the resolution. There is no quotient term in degree $q$, again because $A'^1=0$. Its cohomology exact sequence therefore gives acyclicity through degree $q$. Every choice was made in a finite-dimensional vector space.
\end{proof}

If $H^q(B)=0$, the representative hypothesis of the lemma is automatic. For a cocycle $w$, write $\varphi(w)=dv$ and replace $w$ by $w-D\sigma(v)$. Its class is unchanged and its projection is zero. The formulation of Lemma~\ref{lem:extension} also allows the one degree in which the base cohomology must be retained.

\section{The initial Sullivan algebra and its module}\label{sec:initial}
Fix
\begin{equation}\label{eq:base}
 B=(\Lambda(a,b),d),\qquad |a|=6,\quad |b|=17,\qquad
 da=0,\quad db=a^3.
\end{equation}
Its cohomology is the truncated free graded-commutative algebra
\[
 H^*(B)=\Lambda([a])/([a]^3),
\]
concentrated in degrees $0,6,12$. In particular, $B^{11}=0$ and $[a^2]\ne0$.

Let $R=(\Lambda(x,y),0)$, where $|x|=|y|=2$, and let $\m=(x,y)\subset R$. We write $R_j=\Lambda^j(x,y)$ for the subspace of polynomial length $j$, of cohomological degree $2j$. Consider
\[
 A_0=\bigl(\Lambda(a,b,x,y,t_0,t_1,t_2,t_3,u_0,u_1,u_2),D\bigr)
\]
with $D|_B=d$, $Dx=Dy=0$, $|t_i|=5$, $|u_i|=6$, and
\begin{equation}\label{eq:seed-algebra}
 Dt_i=x^{3-i}y^i\quad(0\le i\le3),\qquad
 Du_i=yt_i-xt_{i+1}\quad(0\le i\le2).
\end{equation}
The projection $\varphi_0:A_0\to B$ annihilates all relative generators. The consecutive syzygies in \eqref{eq:seed-algebra} give $D^2=0$.

Let $P_0$ be the semifree $A_0$-module on cells $e_0,\ldots,e_3$ of degree $6$ and $f_0,f_1,f_2$ of degree $7$. Define
\begin{equation}\label{eq:seed-module}
\begin{array}{c|ccc}
 &\delta&\varepsilon&h\\ \hline
 e_i&0&x^{3-i}y^i&t_i\\
 f_i&ye_i-xe_{i+1}&0&-u_i+\delta_{i0}a.
\end{array}
\end{equation}
The maps extend by the differential module rule and graded $A_0$-linearity. On each $f_i$,
\[
 Dh(f_i)+h\delta(f_i)=-(yt_i-xt_{i+1})+(yt_i-xt_{i+1})=0.
\]
Together with the equations on $e_i$, this proves $Dh+h\delta=\varepsilon$. The same formulas verify $\delta^2=0$ and $D\varepsilon=\varepsilon\delta$.

The term $a$ in $h(f_0)=-u_0+a$ is the only initial component of the homotopy in the base algebra. It is allowed because $h$ takes values in all of $A_0$, rather than only in $K_0=\ker\varphi_0$. This component will cancel a module obstruction, while the square of a primitive will impose an incompatible condition on an algebra retraction.

\begin{lemma}\label{lem:initial-cone}
The cone of $\varepsilon:P_0\to K_0^3$ is acyclic through degree eight. Its degree-nine cohomology has the basis represented by
\[
 \gamma_{m,i}=(mt_i,-me_i),\qquad
 m\in\{x^2,xy,y^2\},\quad 0\le i\le3.
\]
Moreover, $\Ob(\gamma_{m,i})=0$.
\end{lemma}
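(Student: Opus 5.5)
The plan is to compute the cone \eqref{eq:cone} degree by degree through degree nine. In this range only a handful of monomials occur, so everything reduces to linear algebra over $R=\Lambda(x,y)$. First I will list the relevant pieces. The relative generators have degrees $2,2,5,6$ and the base generators have degrees $6,17$. A word in $K_0^3$ of length $\ge3$ therefore has degree at least $6$. Through degree $9$ this gives
\[
(K_0^3)^6=R_3,\quad (K_0^3)^7=0,\quad (K_0^3)^8=R_4,\quad (K_0^3)^9=\textstyle\bigoplus_i R_2t_i.
\]
Odd degree $7$ is empty because $xt_i$ has length two; base factors such as $a$ first appear in degree $\ge12$. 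Since $A_0^1=A_0^3=0$, $A_0^2=R_1$ and $A_0^4=R_2$, the module components are
\[
P_0^6=\textstyle\bigoplus\Q e_i,\quad P_0^7=\bigoplus\Q f_i,\quad P_0^8=\bigoplus R_1e_i,\quad P_0^9=\bigoplus R_1f_i,\quad P_0^{10}=\bigoplus R_2e_i.
\]
Thus $\Cc^q=(K_0^3)^q\oplus P_0^{q+1}$ is explicit for $q\le9$.

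Next I will check acyclicity in degrees $\le8$. Everything rests on the Hilbert--Burch resolution $0\to R(-4)^3\to R(-3)^4\to\m^3\to0$, whose maps are exactly $f_i\mapsto ye_i-xe_{i+1}$ and $e_i\mapsto x^{3-i}y^i$.
\begin{itemize}
\item In degree $5$, $\partial(0,e)=(\varepsilon e,0)$ is injective.
\item In degree $6$, a cycle $(j,p)$ with $p\in P^7$ needs $\delta p=0$. This forces $p=0$, because the $\delta f_i$ are independent. Then $j\in R_3=\varepsilon(P^6)$ is a boundary.
\item In degree $7$, a cycle is $(0,p)$ with $p\in R_1e_i$ and $\varepsilon p=0$. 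Exactness of the resolution in degree $8$ puts $p$ in $\delta(P^7)$, so the cycle is a boundary.
\item In degree $8$, injectivity of $R_1f_i\to R_1e_i$ forces the $P$-component to vanish. The $K^3$-component lies in $R_4=\varepsilon(R_1e_i)$.
\end{itemize}

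In degree $9$ I will count dimensions. The cone $\Cc^9=\bigoplus R_2t_i\oplus\bigoplus R_2e_i$ has dimension $24$. The cycle condition is $Dj+\varepsilon p=0$ in $R_5$, and the map $(j,p)\mapsto Dj+\varepsilon p$ is onto $R_5$ (dimension $6$), so the cycles form an $18$-dimensional space. Every element of $\Cc^8$ is $D$-closed, so the boundaries are $\{(\varepsilon p',-\delta p')\}$. For these to be cycles we need $\varepsilon p'=0$, and since $\delta$ is injective the boundaries form a $6$-dimensional space. Hence $H^9$ has dimension $12$.

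The $\gamma_{m,i}$ are cycles because $D(mt_i)=mx^{3-i}y^i=\varepsilon(me_i)$. They are independent modulo boundaries: a relation would force $\sum c\,mt_i=0$, and the $mt_i$ form a basis of $(K_0^3)^9$. Since there are twelve of them, they form a basis. Finally, $m$ has even degree, so
\[
\Ob(\gamma_{m,i})=-mt_i-h(-me_i)=-mt_i+m\,t_i=0.
\]

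The step I expect to be most delicate is degree $8$ and the boundary computation in degree $9$. Both rely on exactness of the Hilbert--Burch complex on the left, namely injectivity of $\delta$ on $R_1f_i$. I would verify this directly from the $2\times2$ minor structure of the matrix with rows $(y,-x)$ placed consecutively. I also need to confirm that no base or $u_i$ terms sneak into the relevant degrees; the degree bookkeeping above handles this.
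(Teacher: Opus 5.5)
Your proof is correct, but it is organized differently from the paper's.

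\textbf{Your route.} You work directly in degrees $\le 9$. There every module coefficient is a polynomial in $x,y$, so the only coefficient differential that appears is $D(mt_i)$. Exactness of the Hilbert--Burch complex for $\m^3$ then settles degrees $5$ through $8$. In degree nine, a dimension count ($18$-dimensional cycles, $6$-dimensional boundaries) together with the independence of the $mt_i$ in $(K_0^3)^9$ gives the basis.

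\textbf{The paper's route.} The paper proves a global statement instead. With $J_0=A_0\m^3$, it shows that $P_0\to J_0$ is a quasi-isomorphism of differential $A_0$-modules in all degrees. This uses an explicit contraction $k$ of the kernel ($k(\xi_i)=f_i$), which absorbs the nonzero coefficient differential. It then uses the short exact sequence
\[
0\to\Cc(P_0\to J_0)\to\Cc(P_0\to K_0^3)\to K_0^3/J_0\to0
\]
to identify the cone cohomology with that of $K_0^3/J_0$. This quotient vanishes below degree nine and is spanned by the closed-modulo-$J_0$ monomials $mt_i$ in degree nine.

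\textbf{Comparison.} Your approach is more elementary and self-contained in the range the lemma needs, but it depends on explicit bookkeeping of each graded piece. The paper's approach avoids dimension counting and explains the degree-nine classes conceptually, as the first monomials of $K_0^3$ outside the ideal generated by $\m^3$.

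\textbf{A small slip.} In your degree-nine boundary count, the elements $(\varepsilon p',-\delta p')$ with $p'\in R_1f_i$ are automatically cycles. The condition $\varepsilon p'=0$ is not something to impose; it holds because $\varepsilon(f_i)=0$. The boundary dimension $6$ then follows from injectivity of $\delta$ on $R_1f_i$, as you say.
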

\begin{proof}
The monomials $x^3,x^2y,xy^2,y^3$ generate $\m^3$, and their syzygies are freely generated by $ye_i-xe_{i+1}$, $0\le i\le2$. To see generation, reduce a relation modulo $y$: its first coefficient is divisible by $y$. Subtract the corresponding first consecutive syzygy, divide the remaining polynomial relation by $y$, and continue. Independence follows successively from the coefficients of $e_0,e_1,e_2$.

Put $J_0=A_0\m^3$. Since the underlying graded algebra of $A_0$ is free over $R$, the preceding resolution extends to an exact sequence
\[
 0\longrightarrow S\xrightarrow{M}E\xrightarrow{\mu}J_0\longrightarrow0
\]
of differential $A_0$-modules. Here $E$ is free on $e_i$ of degree $6$, $S$ is free on $r_i$ of degree $8$, both carry the coefficient differential, and
\[
 M(r_i)=ye_i-xe_{i+1},\qquad \mu(e_i)=x^{3-i}y^i.
\]
The maps commute with that differential because $x,y$ are closed. The module $P_0$ is the mapping cone of $M$. Our desuspension convention is $f_i=s^{-1}r_i$ and $\delta f_i=M(r_i)$, extended by the differential module rule.

Here the contraction can be written explicitly. Put $\xi_i=ye_i-xe_{i+1}$. The kernel of $P_0\to J_0$ is free as a graded $A_0$-module on the $\xi_i$ and $f_i$, with
\[
 \delta\xi_i=0,\qquad \delta f_i=\xi_i.
\]
Define a degree-$-1$ map $k$ on this kernel by
\[
 k(\xi_i)=f_i,\qquad k(f_i)=0,\qquad
 k(c\eta)=(-1)^{|c|}c\,k(\eta).
\]
Then $\delta k+k\delta=\id$: this holds on each generator, and the terms involving $D(c)$ cancel on a scalar multiple. Thus the kernel is contractible and $P_0\to J_0$ is a quasi-isomorphism. This proves the assertion with the nonzero coefficient differential included.

There is consequently an exact sequence of complexes
\[
 0\longrightarrow\Cc(P_0\to J_0)
 \longrightarrow\Cc(P_0\to K_0^3)
 \longrightarrow K_0^3/J_0\longrightarrow0
\]
whose first term is acyclic. A monomial in the last quotient has at least three relative factors, at most two of which are $x,y$, and at least one of which has degree at least five. Its degree is therefore at least $2+2+5=9$. This proves the initial acyclicity.

In degree nine, the quotient has exactly the monomials $mt_i$ displayed in the statement. Their differentials lie in $J_0$, and the degree-eight quotient is zero. They form a basis of its degree-nine cohomology, hence of the cone cohomology. The pairs $\gamma_{m,i}$ are lifts which satisfy the cone-cycle equations. Finally,
\[
 \Ob(mt_i,-me_i)=-mt_i+h(me_i)=0
\]
because $m$ has even degree and $h(e_i)=t_i$.
\end{proof}

Adjoin one cell $c_{m,i}$ of degree nine for each of these classes:
\begin{equation}\label{eq:degree-nine}
 \delta c_{m,i}=-me_i,\qquad
 \varepsilon(c_{m,i})=-mt_i,\qquad h(c_{m,i})=0.
\end{equation}
There is no algebra extension at this step. Lemma~\ref{lem:extension}, or its cone exact sequence directly, makes the cone acyclic through degree nine.

Next apply Lemma~\ref{lem:extension} for $q=10$ and $q=11$, taking $\mathcal U=H^q(A)$ at each step. These choices are possible because $H^{10}(B)=H^{11}(B)=0$. Denote the resulting system by $(A_-,P_-,\varepsilon,h)$. Its relative algebra generators have degrees
\begin{equation}\label{eq:low-degrees}
 2,5,6,9,10,
\end{equation}
and its module cells have degrees $6,7,9,10,11$. Its cone is acyclic through degree eleven. No choices of bases or primitives beyond those in Lemma~\ref{lem:extension} are imposed.

\section{The quadratic obstruction in degree twelve}\label{sec:detector}
We now identify the two linear conditions that allow the module construction to continue while preserving the multiplicative obstruction. A coefficient is always taken in the ordered monomial basis obtained from the displayed generators and the successive relative Sullivan extensions.

\begin{lemma}\label{lem:a-free}
The differential of every module cell of $P_-$ is free of the base generator $a$.
\end{lemma}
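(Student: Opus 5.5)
The argument is a degree count, run through the cells of $P_-$ in the order they were introduced. Since $a$ has degree $6$, a cell $g$ of degree $n$ has $\delta g\in(A_-\otimes E)^{n+1}$. A term of $\delta g$ containing $a$ is a sum of products $a\cdot\alpha\cdot e$, where $e$ is an earlier cell of degree at least $6$ and $\alpha\in A_-$. Such a term therefore has degree at least $6+6=12$. The cells of $P_-$ have degrees $6,7,9,10,11$, so every differential $\delta g$ lies in degree at most $12$. Hence only the degree-$11$ cells need any argument.

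\textbf{Low-degree cells.} First I settle the cells of degree at most $10$ by degree alone.
\begin{itemize}
\item For the seed cells $e_i,f_i$, the formulas in \eqref{eq:seed-module} show the claim directly: $\delta e_i=0$ and $\delta f_i=ye_i-xe_{i+1}$.
\item For the cells $c_{m,i}$, \eqref{eq:degree-nine} gives $\delta c_{m,i}=-me_i$, which involves only $x,y$.
\item For the degree-$10$ cells, $\delta g=p$ has degree $11<12$, so no $a$-term fits.
\end{itemize}

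\textbf{Degree-$11$ cells.} For a degree-$11$ cell $g$, the only possible $a$-terms in $\delta g=p$ have the form $\lambda\, a\, e_i$ with $\lambda\in\Q$. To handle them, I go back to the choice of cone cycles $\gamma=(j,p)$ in Lemma~\ref{lem:extension} at $q=11$. I will argue that a basis of $H^{11}(\Cc(P))$ can be chosen, or automatically is, with no $a e_i$ terms.

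The tool is the split projection $\varphi$. Extend it to a map of modules $P\to B\otimes E$, sending each cell to itself. A cone cycle satisfies $\delta p=0$ and $Dj+\varepsilon p=0$. Write the $ae_i$ component of $p$ as $\sum\lambda_i\,ae_i$. Its image under $\varepsilon$ has the component $\sum\lambda_i\,a\,x^{3-i}y^i$. Every other contribution to the coefficient of $a x^{3-i}y^i$ must then come from $Dj$ or from other terms of $p$. Here I would use the following facts:
\begin{itemize}
\item $j\in K^3$ has degree $11$;
\item the cells of degree $7$ and $9$ have $\varepsilon$-values free of $a$;
\item the only algebra elements of degree $\le5$ multiplying the cells are polynomials in $x,y$ and the generators $t_i$, together with the degree-$2,5$ relative generators.
\end{itemize}
From these I would conclude that $a x^{3-i}y^i$ cannot be cancelled, so every $\lambda_i=0$.

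\textbf{Fallback.} If $\lambda_i=0$ is not forced outright, I would use the freedom in choosing $\gamma$ instead. The term $\lambda\,ae_i$ can be removed by adding the boundary $\partial(0,\lambda\, a f)$ for a suitable degree-$7$ cell combination, or by using $\partial(\lambda\, at_i,\cdot)$. This keeps the class of $\gamma$ but alters its $p$-component by $\delta$ of something, and it is consistent with the convention that $P_-$ is built from choices made ``only in Lemma~\ref{lem:extension}''.

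\textbf{Main obstacle.} The hard part is this degree-$11$ step. One must verify exactly which degree-$12$ monomials times $e_i$ can appear, and show that the $ae_i$ component of a cone cycle is forced to vanish, without relying on an unspecified basis choice. I would first try the coefficient-of-$ax^{3-i}y^i$ argument in $\varepsilon p+Dj=0$, using that $\varepsilon(e_i)=x^{3-i}y^i$ is a monomial basis of $\m^3$.
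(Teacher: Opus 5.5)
Your main line for the degree-eleven cells is exactly the paper's argument: compare coefficients of $ax^{3-i}y^i$ in $Dj+\varepsilon(p)=0$. Your opening degree count is a tidy replacement for the paper's term lists for the lower cells: any $a$-term of a differential has degree at least $12$, so only the $ae_i$-components of degree-eleven cells matter. As written, however, the decisive step is only announced, and the facts you list do not suffice. Two checks close it.

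First, $Dj$ is free of $a$. The element $j\in(K^3)^{11}$ contains no $a$, since an $a$-monomial in $K^3$ has degree at least $12$. It also contains no relative generator of degree $9$ or $10$, since two further relative factors would give degree at least $13$. This second point does not follow from ``$j\in K^3$ has degree $11$'' alone. The degree-nine generators kill all of $H^{10}(A_0)$, which contains $[ax^2],[axy],[ay^2]$, so some of their differentials do involve $a$.

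Second, besides the $ae_i$, the terms of $p\in P^{12}$ are $R_3e_i$, $u_je_i$, $t_jf_i$, and $xg$, $yg$ with $g$ a degree-ten cell. The degree-nine cells contribute nothing because $A_-^3=0$. Your list covers the degree-seven and degree-nine cells but omits the degree-ten cells. Their augmentations lie in $(K^3)^{10}$ and are $a$-free by the same degree bound. Linear independence of the four monomials $ax^{3-i}y^i$ then forces every $\lambda_i=0$.

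Discard the fallback, because it cannot work. A cone boundary in degree $11$ changes $p$ by $-\delta p'$ with $p'\in P^{11}$. That space is spanned by elements $t_je_i$, $R_2f_i$, $xc_{m,i}$, $yc_{m,i}$, and none of their differentials has an $a$-term. Your proposed $af_i$ has degree $13$, not $11$, and $\delta(af_i)=a(ye_i-xe_{i+1})$ is not $ae_i$. Likewise $at_i$ has relative length one, so it does not lie in $K^3$. Hence the $ae_i$-coefficients of $p$ depend only on the cone class and cannot be normalized away by a choice of representative. They must be forced to vanish, which is exactly what the completed coefficient argument does.
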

\begin{proof}
This is immediate for the initial cells and the degree-nine cells in \eqref{eq:degree-nine}. The differential of a degree-ten cell lies in $P^{11}$. Its possible terms are $t_je_i$, elements of $R_2f_i$, and $x,y$ times degree-nine cells. None contains $a$.

For a degree-eleven cell, its differential $p$ lies in degree twelve of the module already constructed. Its cells have degrees $6,7,9,10$, so the required coefficient degrees are $6,5,3,2$. The complete list of possible terms is therefore
\[
 (R_3+\Span(a,u_0,u_1,u_2))e_i,\qquad
 t_jf_i,\qquad xg,\ yg,
\]
where $g$ ranges over degree-ten cells; degree-nine cells contribute nothing because $A^3=0$. The only terms containing $a$ are $ae_i$. By the attachment formulas there is $j\in(K^3)^{11}$ such that
\[
 \varepsilon(p)=-Dj.
\]
The element $j$ contains no $a$, since $a$ together with three relative factors has degree at least twelve. It contains no new algebra generator of degree nine or ten either, since two further relative factors would give degree at least thirteen. Therefore $Dj$ is free of $a$.

Every other term of $\varepsilon(p)$ is free of $a$ as well. Indeed, aside from $ae_i$, the possible terms of $p$ are $R_3e_i$, $u_je_i$, $t_jf_i$, and $x,y$ times degree-ten cells. The augmentations of the latter cells lie in $(K^3)^{10}$, which has no $a$-term. The four monomials $ax^{3-i}y^i$ are linearly independent, so the coefficients of $ae_i$ must all vanish.
\end{proof}

The coefficient functional is chosen to annihilate
$h(u_0f_0)=-u_0^2+au_0$. Equal coefficients on $u_0^2$ and $au_0$ produce this cancellation. On a square $(u_0+\alpha a+P_3(x,y))^2$, with $P_3\in R_3$, the same functional has value $1+2\alpha$. Section~\ref{sec:multiplication} will show why such a square is forced by a multiplicative retraction.

For a monomial $M$, write $[M]^*$ for its coefficient functional. Define
\begin{equation}\label{eq:ell}
 \ell=[u_0^2]^*+[au_0]^*:A_-^{12}\longrightarrow\Q.
\end{equation}
This is a linear functional on cochains, not an algebra morphism. Neither monomial is assumed to be a cocycle; the next lemma is what permits $\ell$ to descend to cohomology.

\begin{lemma}\label{lem:ell-boundaries}
The functional $\ell$ satisfies
\[
 \ell(DA_-^{11})=0,\qquad
 \ell(xA_-^{10}+yA_-^{10})=0.
\]
\end{lemma}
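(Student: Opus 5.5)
The plan is to compute both statements directly in the monomial basis, using only degree bookkeeping. We prove the second identity first, since the first will reduce to it.

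\emph{Multiples of $x$ and $y$.} The monomial basis of $A_-$ consists of products of the free generators $a,b,x,y,t_i,u_j$ and the relative generators of degrees $9$ and $10$ adjoined by Lemma~\ref{lem:extension}. The generators $x,y$ are even and polynomial. Hence multiplying a basis monomial by $x$ (or $y$) gives, up to sign, another basis monomial, and that monomial contains $x$ (or $y$). Neither $u_0^2$ nor $au_0$ contains $x$ or $y$. Therefore both coefficient functionals in \eqref{eq:ell} vanish on $xA_-^{10}+yA_-^{10}$, which proves the second identity.

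\emph{Listing $A_-^{11}$.} The degrees available are $2$ for $x,y$, $5$ for $t_i$, $6$ for $a,u_j$, $17$ for $b$, and $9,10$ for the new generators, by \eqref{eq:low-degrees}. A degree-eleven monomial must then be one of the following:
\[
 t_iM\ (M\in R_3),\qquad at_i,\qquad u_jt_i,\qquad xz,\ yz\ (|z|=9).
\]
There is no degree-one element to pair with a degree-ten generator. Two $t$'s give degree $10$ and three give $15$, so neither fits.

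\emph{Differentials of the listed monomials.} We compute $D$ on each type and check that $\ell$ vanishes.
\begin{itemize}[label=--]
\item $D(t_iM)=x^{3-i}y^iM$ lies in $xA_-^{10}+yA_-^{10}$.
\item $D(at_i)=\pm a\,x^{3-i}y^i$ also lies in $xA_-^{10}+yA_-^{10}$.
\item $D(xz)=xDz$ and $D(yz)=yDz$ lie there as well.
\item $D(u_jt_i)=(yt_j-xt_{j+1})t_i+u_jx^{3-i}y^i$. Its terms are multiples of $x$ or $y$, or products $t_jt_i$ of two odd generators. Neither kind has a $u_0^2$ or $au_0$ coefficient.
\end{itemize}
Combining these cases with the second identity gives $\ell(DA_-^{11})=0$.

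The only step needing care is the completeness of the list of degree-eleven monomials, in particular that no degree-ten generator can occur. This follows because $A^1=0$.
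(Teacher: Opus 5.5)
Your proposal is correct and follows essentially the same route as the paper: enumerate the degree-eleven monomials from the generator degrees, observe that every differential lies in the ideal generated by $x$ and $y$, and note that $u_0^2$ and $au_0$ carry no such factor. A minor point: in $D(u_jt_i)$ the products $t_jt_i$ occur only multiplied by $y$ or $x$, so this case is already covered by the ideal argument rather than being a separate kind of term.
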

\begin{proof}
The possible monomials of degree eleven are
\[
 t_iR_3,\qquad t_iu_j,\qquad at_i,\qquad xz,\ yz,
\]
where $z$ ranges over the relative generators of degree nine. This list follows from \eqref{eq:low-degrees} and $|b|=17$. The differential of each monomial belongs to the ideal $(x,y)$. Neither $u_0^2$ nor $au_0$ belongs to that ideal. The same observation applies to $xA_-^{10}+yA_-^{10}$.
\end{proof}

In particular, $\ell$ induces a functional on $H^{12}(A_-)$. We use the same notation for this induced functional.

\begin{proposition}\label{prop:double-annihilation}
For every degree-twelve cone cycle $\gamma\in\Cc(P_-)$,
\begin{equation}\label{eq:two-annihilations}
 \ell(\Ob\gamma)=0,\qquad \varphi(\Ob\gamma)=0.
\end{equation}
The second equality is an equality of cochains in $B^{12}$.
\end{proposition}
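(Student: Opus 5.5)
Write a degree-twelve cone cycle as $\gamma=(j,p)$ with $j\in(K^3)^{12}$, $p\in P_-^{13}$, $\delta p=0$ and $Dj+\varepsilon p=0$. Then $\Ob\gamma=-j-h(p)$. We treat the two summands separately and show that each is annihilated by both $\ell$ and $\varphi$.

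The summand $j$ is handled first. A monomial of $K^3$ of degree twelve has at least three relative factors. If it contains $a$, its degree is at least $6+2+2+2=12$, so it lies in $aR_3$. Such monomials contain $x$ or $y$, and $\varphi$ kills them because they are relative. Monomials without $a$ are relative words and are also killed by $\varphi$. Thus $\varphi(j)=0$. Every degree-twelve monomial of $K^3$ is either divisible by $x$ or $y$, or is a product of at least three relative generators of degrees in $\{5,6,9,10\}$ with total degree twelve. The latter is impossible: the smallest triple has degree $15$. Hence $j\in xA_-^{10}+yA_-^{10}$, and $\ell(j)=0$ by Lemma~\ref{lem:ell-boundaries}.

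The main work is $h(p)$. We expand $p$ in the cell basis. The cells have degrees $6,7,9,10,11$, so the coefficients have degrees $7,6,4,3,2$. Degree-seven coefficients are $R_2t_i$, $a$-free and lying in $(x,y)$; there is also $b$, but $|b|=17$, so it does not occur. Degree-six coefficients lie in $R_3+\Span(a,u_0,u_1,u_2)$. Degree-four coefficients lie in $R_2$, and degree-two coefficients in $R_1$; degree-three coefficients vanish. Every term whose coefficient lies in the ideal $(x,y)$ contributes to $h(p)$ an element of $(x,y)A_-^{10}$, on which $\ell$ and $\varphi$ both vanish. Indeed, $\varphi$ kills $x,y$, and $h$ is $A$-linear up to sign. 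The remaining terms are $\alpha e_i$ and $\beta f_i$ with $\alpha\in\Span(a,u_0,u_1,u_2)$ of degree six and $\beta$ a degree-five coefficient. By degree, the latter means $\beta$ is a combination of the $t_k$, and then $h(t_kf_i)$ lies in $t_k\cdot(\ldots)$.

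The terms $t_k f_i$ and $u_j e_i$ are then checked directly. For these, $h$ gives $t_k(-u_i+\delta_{i0}a)$, a degree-eleven element, whereas a degree-twelve coefficient of $f_i$ is required. The degree count must therefore be redone: $|p|=13$ and $|f_i|=7$, so the coefficient has degree six, and the coefficient terms are $\Span(a,u_0,u_1,u_2)+R_3$ on $f_i$. On $e_i$ the coefficient has degree seven, namely $R_2t_k$. With the correct degrees, the cell-degree/coefficient-degree pairs are $(6,7)$, $(7,6)$, $(9,4)$, $(10,3)$ and $(11,2)$. Only the $f_i$ with coefficients $\lambda a+\sum\mu_ju_j$ matter. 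For these,
\[
h(cf_i)=c(-u_i+\delta_{i0}a),
\]
and the cocycle condition must be used to constrain the coefficients. The relation $\delta p=0$ gives
\[
\sum_i c_i(ye_i-xe_{i+1})+\dots=0.
\]
Here Lemma~\ref{lem:a-free} and the method of its proof enter. The coefficients of the $e_i$ in $\delta p$ that are free of $(x,y)$ come only from $D(c_i)$ and from $a$-free cell differentials, which yields $D c_i\in(x,y)$ conditions. Taking $D$ of combinations of $a$ and the $u_j$ forces $\sum\mu_j(yt_j-xt_{j+1})$ to be cancelled by other terms. I expect this step to be the main obstacle: it must show that the coefficients $c_i$ on $f_i$ satisfy $c_i\in(x,y)$ for $i\neq0$ after corrections, and that $c_0\in\Q u_0+\Q a$ pairs so that $\ell(c_0(-u_0+a))=0$. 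The target identity is
\[
\ell\bigl((\lambda a+\mu u_0)(a-u_0)\bigr)=\lambda-\mu=0\quad\text{up to the }a^2\text{ term},
\]
and $\varphi$ gives $\lambda a^2$. The proof therefore has to force $\lambda=0$ and $\mu$ arbitrary. Then $h(u_0f_0)=-u_0^2+au_0$ has $\ell=0$ by the choice of $\ell$, and $\varphi=0$. I would obtain $\lambda=0$ and the vanishing of the $\mu_j$ for $j\ne0$ from the $a$-coefficient and $u_j$-coefficient of the $e_i$-components of $\delta p=0$, using that $\delta f_i=ye_i-xe_{i+1}$ involves distinct $e$'s and that the degree-$9$, $10$ and $11$ cell differentials are $a$-free by Lemma~\ref{lem:a-free}.
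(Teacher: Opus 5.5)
Your treatment of $j$ is correct. After your mid-course correction, the reduction of $h(p)$ is also correct: the degree-seven coefficients are $xt_k,yt_k$, not $R_2t_k$. Everything except the terms $c_if_i$ with $c_i=\lambda_ia+\sum_j\mu_{ij}u_j$ lands in $xA_-^{10}+yA_-^{10}$, and there $\ell$ and $\varphi$ vanish.

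After that you misidentify what has to be proved. The formula $\ell\bigl((\lambda a+\mu u_0)(a-u_0)\bigr)=\lambda-\mu$ is wrong. The $\mu$-contributions $\mu au_0-\mu u_0^2$ cancel under $\ell$, so the value is $-\lambda$. More generally, $h(c_if_i)=c_i(-u_i+\delta_{i0}a)$ contains $u_0^2$ or $au_0$ only through $\lambda_0$: for $i\neq0$ the products $au_i$ and $u_ju_i$ are never $u_0^2$ or $au_0$. Likewise $\varphi$ sees only $\lambda_0a^2$. Hence $\ell(\Ob\gamma)=\lambda_0$ and $\varphi(\Ob\gamma)=-\lambda_0a^2$, and both equalities are exactly the statement $\lambda_0=0$. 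Nothing needs to be said about the $\mu_{ij}$ or about $c_i$ for $i\neq0$.

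This matters, because the extra claims you plan to prove ($\mu_{0j}=0$ for $j\neq0$, and $c_i\in(x,y)$ for $i\neq0$) cannot be extracted from the $u_j$-coefficients of $\delta p=0$, and need not hold. The degree-eleven cells may have $u_je_i$ terms in their differentials; Lemma~\ref{lem:a-free} excludes only the $ae_i$. For instance, $u_ie_j+t_jf_i$, corrected by $x$- and $y$-multiples of degree-ten cells, is a degree-twelve cycle that those cells must kill. Their $x$- and $y$-multiples then cancel the $u$-components of $\delta(u_1f_0+u_0f_1)$, which yields admissible degree-thirteen cycles with $u_1f_0+u_0f_1$ terms.

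The step that does the work is the one you defer as the main obstacle. It is short, and it is the paper's argument. The only terms of $p$ containing $a$ are the $\lambda_iaf_i$, and $\delta(af_i)=a(ye_i-xe_{i+1})$ because $Da=0$. Every other term has an $a$-free differential:
\begin{itemize}
\item the coefficients $xt_k,yt_k,u_j$ and the polynomials have $a$-free $D$;
\item $e_i$, $f_i$ and the degree-nine and degree-eleven cells have $a$-free $\delta$, by Lemma~\ref{lem:a-free} and the initial formulas;
\item degree-ten cells cannot occur, since $A_-^3=0$.
\end{itemize}
So the $a$-part of $\delta p=0$ reads $\sum_i\lambda_i\,a(ye_i-xe_{i+1})=0$, and the coefficients of $e_0,e_1,e_2$ give $\lambda_0=\lambda_1=\lambda_2=0$. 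With $\lambda_0=0$, the only term still detected by $\ell$ is $h(u_0f_0)=-u_0^2+au_0$. This term is annihilated by the choice of $\ell$ and projects to zero under $\varphi$, which completes both equalities.
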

\begin{proof}
Write $\gamma=(j,p)$, with $p\in P_-^{13}$. The complete list of possible coefficients of $p$ is
\[
\begin{array}{c|l}
\text{cell degree}&\text{coefficient}\\ \hline
6&xt_j,\ yt_j\\
7&R_3,\ u_j,\ a\\
9&R_2\\
10&0\\
11&x,\ y.
\end{array}
\]
There is no degree-three coefficient for a degree-ten cell. In particular, the only terms containing $a$ are $af_i$.

The differentials of all remaining terms are free of $a$, by Lemma~\ref{lem:a-free} and the initial algebra differentials. Since $\delta p=0$, the coefficients $\lambda_i$ of the $af_i$ satisfy
\[
 \sum_{i=0}^2\lambda_i(ye_i-xe_{i+1})=0.
\]
Reading the coefficients of $e_0,e_1,e_2$ successively gives $\lambda_0=\lambda_1=\lambda_2=0$.

The ideal component $j\in K^3$ has no $u_0^2$, $au_0$ or $a^2$ coefficient: these monomials have relative lengths two, one and zero. In $h(p)$, every remaining term has a factor $x$ or $y$, except possibly the terms $h(u_jf_i)$. Of those, the only term detected by $\ell$ is
\[
 h(u_0f_0)=-u_0^2+au_0,
\]
which $\ell$ annihilates. This proves the first equality in \eqref{eq:two-annihilations}.

For the second equality, all terms with a relative scalar coefficient project to zero. Each $h(u_jf_i)$ also projects to zero. The only possible source of a pure $a^2$ term would have been $h(af_0)$, and the $af_i$ terms have already been excluded. Finally $\varphi(j)=0$. Hence $\varphi(\Ob\gamma)=0$.
\end{proof}

Set
\begin{equation}\label{eq:protected-subspace}
 \mathcal U_{12}
 =
 \ker\bigl(\ell:H^{12}(A_-)\to\Q\bigr)
 \cap
 \ker\bigl(H^{12}(\varphi):H^{12}(A_-)\to H^{12}(B)\bigr).
\end{equation}
Proposition~\ref{prop:double-annihilation} places every obstruction class in this subspace. Every class of $\mathcal U_{12}$ has a representative in $K$: if $w$ is a representative, then $\varphi(w)$ is exact in $B^{12}$, and $B^{11}=0$ implies $\varphi(w)=0$ strictly. Also $\ell(w)=0$, by Lemma~\ref{lem:ell-boundaries}.

Lemma~\ref{lem:extension} now applies with $q=12$ and $\mathcal U=\mathcal U_{12}$. Let
\[
 (A_{12},P_{12},\varepsilon_{12},h_{12})
\]
denote the resulting system. Its new algebra generators have degree eleven and differentials annihilated by $\ell$; its new module cells have degree twelve. Extending $\ell$ by the same coefficient formula, we obtain
\begin{equation}\label{eq:protected-identities}
 H^{\le12}\bigl(\Cc(P_{12})\bigr)=0,\qquad
 \ell\bigl(DA_{12}^{11}+xA_{12}^{10}+yA_{12}^{10}\bigr)=0.
\end{equation}
The projection remains split, so the class $[a^2]$ remains nonzero in $H^{12}(A_{12})$. This establishes the finite stage needed for completion.

For reference, the four steps after the initial system are summarized below. The second column specifies the subspace killed in the current $H^q(A)$; the last column gives the degree of the new module cells.
\[
\begin{array}{c|c|c|c}
 q&\text{algebra classes killed}&\text{new algebra degrees}&\text{new cell degree}\\ \hline
 9&0&\text{none}&9\\
 10&H^{10}(A)&9&10\\
 11&H^{11}(A)&10&11\\
 12&\mathcal U_{12}&11&12
\end{array}
\]
In particular, the degree-nine step requires no algebra generator of degree eight, and degree twelve is treated by a prescribed subspace rather than by killing all cohomology.

\section{Completion and module sectional category}\label{sec:completion}
Completion here means continuing the construction in every higher degree and taking its ordinary increasing union. Every element belongs to a finite stage; no formal series or topological completion is involved.

Starting with $A^{(12)}=A_{12}$ and $P^{(12)}=P_{12}$, apply Lemma~\ref{lem:extension} successively for every $q\ge13$, taking
\[
 \mathcal U=H^q(A^{(q-1)}).
\]
The representative hypothesis holds because $H^q(B)=0$ for all $q\ge13$. At each step the relative algebra generators have degree $q-1$ and the new module cells have degree $q$. Write
\[
 \Ahat=\bigcup_{q\ge12} A^{(q)},\qquad
 \Phat=\bigcup_{q\ge12} P^{(q)},\qquad
 \Khat=\ker(\widehat\varphi:\Ahat\to B).
\]

\begin{proposition}\label{prop:completion}
The union defines a minimal split Sullivan algebra
\[
 (\Lambda(a,b),d)\longrightarrow
 \Ahat=(\Lambda(a,b)\otimes\Lambda W,D)
 \xrightarrow{\widehat\varphi}(\Lambda(a,b),d)
\]
of finite type, together with a semifree quasi-isomorphism
\[
 \varepsilon:\Phat\xrightarrow{\simeq}\Khat^3
\]
and a degree-$-1$ module map $h:\Phat\to\Ahat$ satisfying $Dh+h\delta=\varepsilon$.
Moreover,
\begin{equation}\label{eq:completed-properties}
 H^q(\Ahat)=0\quad(q>12),\qquad [a^2]\ne0,\qquad
 \ell\bigl(D\Ahat^{11}+x\Ahat^{10}+y\Ahat^{10}\bigr)=0.
\end{equation}
\end{proposition}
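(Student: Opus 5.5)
The plan is to read off each assertion from the stagewise statements of Lemma~\ref{lem:extension} together with \eqref{eq:protected-identities}, using that every relevant quantity in a fixed degree is already determined at a finite stage. The key observation is a stabilization principle. At stage $q$ the new algebra generators have degree $q-1$ and the new cells have degree $q$. Since $\Ahat$ is simply connected with $A^1=0$, a generator of degree $q-1\ge12$ can affect $\Ahat^n$ only for $n\ge q-1$. Similarly, cone terms in degree $n$ stabilize once $q$ is large: new monomials in $\Khat^3$ involving $z$ have degree at least $q+3$, and new scalar multiples of cells have cone degree at least $q+4$, exactly as in the proof of Lemma~\ref{lem:extension}. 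Hence each $\Ahat^n$, $\Phat^n$ and $\Cc(\Phat)^n$ equals the corresponding object at any stage $q\ge n+2$, and it is finite dimensional. This gives finite type.

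First I treat the structural claims. The union of relative Sullivan algebras is again relative Sullivan: the generator order is the order of adjunction. The splitting $\widehat\varphi$ is well defined because each stage extends the previous one by $\varphi(z)=0$. Minimality follows from the argument in Section~\ref{sec:models}, because generators are adjoined in nondecreasing degree and the differentials are decomposable. In the same way $\Phat$ is semifree with the cell order of adjunction, and $\varepsilon$, $h$ and the identity $Dh+h\delta=\varepsilon$ hold on each finite stage and therefore on the union.

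Next come the cohomological claims. For the quasi-isomorphism, fix a degree $n$. By the stabilization principle, $H^n(\Cc(\Phat))$ is computed at stage $n+2$. Since $H^n(\Cc(P^{(n)}))=0$, and later stages do not alter degrees $\le n+1$ of the cone, this group is zero. Hence $\varepsilon$ is a quasi-isomorphism. For $H^q(\Ahat)$ with $q>12$, stage $q$ makes $H^q(A^{(q)})=0$. Stage $q'>q$ preserves $H^j$ for $j<q'$, which includes $j=q$. As $\Ahat^{q},\Ahat^{q+1}$ coincide with those at stage $q+2$, we get $H^q(\Ahat)=0$. The same preservation applies to $H^{12}$ from stage $13$ on, so $H^{12}(\Ahat)=H^{12}(A_{12})$. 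For $[a^2]\ne0$, the cleanest route is the splitting: $\widehat\varphi(a^2)=a^2$ is not exact in $B$, so $a^2$ is not exact in $\Ahat$. Finally, for the $\ell$-identity, $\Ahat^{10}$ and $\Ahat^{11}$ equal $A_{12}^{10}$ and $A_{12}^{11}$. Generators of degree $\ge12$ cannot occur in degree $11$, and a product of a generator of degree $\ge12$ with anything lies in degree $>11$ since $A^1=0$. So \eqref{eq:protected-identities} transfers verbatim, with $\ell$ extended by the coefficient formula.

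I expect the main obstacle to be the careful degree bookkeeping for the stabilization of the cone. This requires checking that a new cell or generator at stage $q$ cannot create or kill cone cohomology in degrees below $q$. I would handle it exactly as in the last two paragraphs of the proof of Lemma~\ref{lem:extension}, using the lower bound $6$ on cell degrees and $2$ on relative generator degrees.
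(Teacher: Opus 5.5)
Your proposal is correct and takes the same route as the paper. It applies Lemma~\ref{lem:extension} stage by stage, uses degreewise stabilization of the increasing unions to get finite type and acyclicity of the cone, uses the split projection to show $[a^2]\ne0$, and notes that $\Ahat^{10}$, $\Ahat^{11}$ and their differentials are unchanged from $A_{12}$, so \eqref{eq:protected-identities} transfers.

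There is one slip to fix. It is false that later stages leave cone degrees $\le n+1$ untouched, because stage $s$ adjoins cells of degree $s$, which lie in cone degree $s-1$. Instead, cite the lemma's conclusion that after stage $N$ the cone is acyclic through degree $N$, and take $N\ge n+2$; together with your stabilization, this gives $H^n(\Cc(\Phat))=0$.
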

\begin{proof}
Each stage is minimal and split by Lemma~\ref{lem:extension}. The differential of every generator is fixed at the stage when it is introduced. Their union is therefore a minimal split Sullivan algebra. The same argument preserves the semifree cell order and the homotopy identities.

In any fixed degree, only finitely many stages can introduce a generator or a cell, and every addition is finite. Hence both the algebra and the module have finite type. A fixed cohomological degree of the cone becomes exact at a finite stage and remains exact thereafter. Since kernels, ideal powers and the structural maps agree with these increasing unions, the cone of $\varepsilon:\Phat\to\Khat^3$ is acyclic.

For each $q\ge13$, the entire target cohomology in degree $q$ is killed, and later stages preserve it. This gives the first assertion of \eqref{eq:completed-properties}. At the first completion step, the degree-twelve generators have differentials representing independent classes in $H^{13}(A_{12})$. The injective connecting map in Lemma~\ref{lem:extension} preserves $H^{12}$, so this step creates no new class there. Subsequent stages also preserve $H^{12}$, and the split projection in any case preserves $[a^2]\ne0$.

No completion generator has degree below twelve. The spaces $\Ahat^{10}$ and $\Ahat^{11}$, together with their differentials, are therefore unchanged from the finite stage. The last identity of \eqref{eq:completed-properties} follows from \eqref{eq:protected-identities}.
\end{proof}

Lemma~\ref{lem:derived} now gives a module homotopy retraction of
\[
 \rho_2:\Ahat\longrightarrow\Ahat/\Khat^3.
\]
There is no such retraction at level one: the cocycle $x^2\in\Khat^2$ is nonexact because $\Ahat^3=0$, and its nonzero cohomology class is killed by $\rho_1$. A retraction would make $H(\rho_1)$ injective. The same class rules out level zero. Thus the module minimum in Theorem~\ref{thm:CV} is exactly two.

The same construction also gives an explicit description of the target cohomology, without any further attachments.

\begin{corollary}\label{cor:cohomology}
The Poincar\'e series of $\Ahat$ is
\[
 \sum_{q\ge0}\dim_{\Q}H^q(\Ahat)\,z^q
 =1+2z^2+3z^4+z^6+2z^8+2z^{12}.
\]
In particular, $H^{12}(\Ahat)$ has dimension two.
\end{corollary}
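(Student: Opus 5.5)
The Poincaré series is computed degree by degree by tracking how $H^q$ changes through the construction. The key tool is Lemma~\ref{lem:extension}: each algebra step preserves $H^j$ for $j<q$, replaces $H^q$ by $H^q/\mathcal U$, and introduces only generators of degree $q-1\ge 8$. So I will first compute $H^{\le 8}$ from the initial algebra $A_0$. Then I will account for the killing in degrees $10,11$, the protected quotient in degree $12$, and the total killing above $12$. Degrees $q>12$ already vanish by Proposition~\ref{prop:completion}.

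\emph{Low degrees.} Below degree nine the algebra $\Ahat$ agrees with $A_0$, because every later generator has degree at least nine and the generators of degree nine and ten only alter $H^{\ge 10}$. So I compute $H^{\le 9}(A_0)$ directly, writing $A_0$ as $B\otimes(\Lambda(x,y,t_i,u_i),D)$. Up to degree eight, $B$ contributes only $1$ and $a$, with $H(B)$ equal to $\Q,\Q a,\Q a^2$ in low degrees. The relative part $F=\Lambda(x,y,t,u)$ is then handled by hand:
\begin{itemize}
\item Degrees $2$ and $4$: the classes are $R_1$ and $R_2$, of dimensions $2$ and $3$.
\item Degree $5$: no cocycles, since $Dt_i$ are independent.
\item Degree $6$: $R_3$ is killed by the $t_i$. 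Among the $u_i$, cocycles would need $\sum c_i(yt_i-xt_{i+1})=0$, which forces $c_i=0$. So the only class is $a$, giving dimension $1$.
\item Degree $7$: the candidates are $xt_i,yt_i$. Their differentials $x^{4-i}y^i,\,x^{3-i}y^{i+1}$ span $R_4$ (dimension $5$) from $8$ elements, so the cocycles form a $3$-dimensional space. These cocycles are exactly $Du_i$, so $H^7=0$.
\item Degree $8$: the generic part is $R_4$ modulo the boundaries $D(xt_i),D(yt_i)$, which is all of $R_4$, giving $0$. Then there are $xu_i,yu_i$ with differentials in $R_1t R_1$, plus $xa,ya$. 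The $a$-terms are cocycles not hit by anything, since $D(t_i a)=x^{3-i}y^ia$ has degree $11$. So they give $2$.
\end{itemize}
For the combinations $\sum(\alpha_ixu_i+\beta_iyu_i)$ I have to check that no nonzero one is a cocycle. This is a finite linear-algebra check on the coefficients of $x\,y\,t_j$ and $x^2t_j$, $y^2t_j$; it is the Koszul-type computation that the second syzygy module of $\m^3$ vanishes. This yields $1+2z^2+3z^4+z^6+2z^8$. In degree nine, the classes $mt_i$ ($m\in R_2$) appear not to be closed, since $D(mt_i)=mx^{3-i}y^i\neq 0$. I need to make sure no combination is closed. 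Mixed terms $x t_i u_j$ have degree $13$ and are irrelevant. Any cocycles of degree $9$ would in any case be killed later by generators of degree $8$. But no degree-eight generators are added, so I must verify $H^9(A_0)=0$ directly; this seems plausible by a similar syzygy count.

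\emph{Degrees 10--12.} $H^{10}$ and $H^{11}$ are killed by the $q=10,11$ steps and preserved afterwards. In degree $12$, $H^{12}(\Ahat)=H^{12}(A_-)/\mathcal U_{12}$, and $\mathcal U_{12}$ is the intersection of the kernels of two functionals. The functional $H(\varphi)$ is nonzero, detecting $[a^2]$. The functional $\ell$ is nonzero, detecting $[u_0^2]$ or some cocycle with nonzero $u_0^2$ or $au_0$ coefficient. The main obstacle is showing the two functionals are linearly independent on $H^{12}(A_-)$, so that the quotient has dimension exactly $2$. I would exhibit $[a^2]$ with $\ell=0$ and $\varphi=a^2\neq 0$. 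For the other class I would exhibit a cocycle $u_0^2+\dots$ whose corrections involve only $x,y$-multiples and base-free terms, so that $\varphi=0$ and $\ell=1$. To produce it I would use that $D(u_0^2)=2u_0(yt_0-xt_1)$ lies in the ideal $(x,y)$, and correct it via the acyclicity of the cone in degree $11$ (equivalently the killing of $H^{11}$). Any correction terms lie in $x\Ahat^{10}+y\Ahat^{10}$, which $\ell$ annihilates by Lemma~\ref{lem:ell-boundaries}. Finally, the completion steps leave $H^{12}$ unchanged, as argued in Proposition~\ref{prop:completion}, giving $2z^{12}$.
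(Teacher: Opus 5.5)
Your architecture matches the paper's: read off $H^{\le 9}$ from $A_0$, let the steps $q=10,11$ kill those degrees, identify $H^{12}(\Ahat)$ with $H^{12}(A_-)/\mathcal U_{12}$, and prove that $(\ell,H^{12}(\varphi))$ is surjective using $a^2$ and one more cocycle. The gap is at the step you yourself call the main obstacle.

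\textbf{The missing cocycle.} You assert that $u_0^2$ can be corrected to a cocycle by terms in $x\Ahat^{10}+y\Ahat^{10}$, but the mechanism you cite does not produce this correction. Write $D(u_0^2)=2y\,u_0t_0-2x\,u_0t_1$. You would want primitives of $u_0t_0$ and $u_0t_1$ from $H^{11}(A_-)=0$, but these elements are not cocycles:
\[
 D(u_0t_0)=x(t_0t_1+x^2u_0),\qquad D(u_0t_1)=y(t_0t_1+x^2u_0).
\]
The construction needs two stages.
\begin{enumerate}[label=\textup{(\arabic*)}]
\item Since $t_0t_1+x^2u_0$ is closed by the seed relations, $H^{10}(A_-)=0$ gives $g\in A_-^9$ with $Dg=t_0t_1+x^2u_0$.
\item Then $H^{11}(A_-)=0$ gives $v_0,v_1\in A_-^{10}$ with $Dv_0=-t_1u_0+yg$ and $Dv_1=-t_0u_0+xg$.
\end{enumerate}
The paper's cocycle is $\Omega=-\tfrac12u_0^2-yv_1+xv_0$, with $\ell(\Omega)=-\tfrac12$ and $\varphi(\Omega)=0$. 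Without this construction, or an equivalent argument, nothing in your sketch rules out $\ell$ vanishing on all of $H^{12}(A_-)$. In that case $\mathcal U_{12}=\ker H^{12}(\varphi)$, and $H^{12}(\Ahat)$ would be one-dimensional.

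Also, acyclicity of the augmentation cone in degree eleven is not equivalent to $H^{11}(A_-)=0$. The cone concerns the module resolution of $K^3$. What is needed here is the vanishing of algebra cohomology in both degrees ten and eleven.

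\textbf{Degree nine.} It is false that no combination of the $mt_i$ is closed. The map $R_2\otimes\Span(t_0,\ldots,t_3)\to R_5$ sends twelve monomials onto the six-dimensional space $R_5$, so its kernel has dimension six; for instance $xy\,t_0-x^2t_1=D(xu_0)$ is closed. The correct claim is that every such cocycle is a boundary $D(w)$ with $w\in R_1\Span(u_0,u_1,u_2)$. This holds because that space is six-dimensional and $D$ is injective on it, which is exactly your degree-eight check.

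\textbf{Degree eight.} Your remark about $D(t_ia)$ is not the relevant reason that $ax$ and $ay$ are not boundaries. The reason is that $A_0^7=R_1\Span(t_0,\ldots,t_3)$ has differentials in $R_4$. Your Künneth splitting $A_0=B\otimes\Lambda(x,y,t_i,u_i)$ gives this immediately.
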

\begin{proof}
In degrees below ten, a monomial of $A_0$ contains at most one of the generators $t_i,u_i$. The only monomials involving the base generator $a$ are $a,ax,ay$. The differentials of the $t_i$ generate the polynomial ideal $(x,y)^3$. The syzygy resolution in Lemma~\ref{lem:initial-cone} shows that every cocycle among their polynomial multiples is a differential of a polynomial combination of the $u_i$. The injectivity of its syzygy map excludes further cocycles involving the $u_i$. Thus the nonzero cohomology groups below degree ten have bases
\[
\begin{array}{c|ccccc}
 q&0&2&4&6&8\\ \hline
 H^q(A_0)&[1]&[x],[y]&[x^2],[xy],[y^2]&[a]&[ax],[ay].
\end{array}
\]
All algebra extensions preserve these degrees. The steps $q=10,11$ kill the entire cohomology groups in those degrees, and subsequent steps preserve their vanishing.

It remains to identify the degree-twelve quotient. Since
$H^{10}(A_-)=H^{11}(A_-)=0$, choose $g\in A_-^9$ and $v_0,v_1\in A_-^{10}$ with
\[
 Dg=t_0t_1+x^2u_0,\qquad
 Dv_0=-t_1u_0+yg,\qquad Dv_1=-t_0u_0+xg.
\]
The first right-hand side is closed by \eqref{eq:seed-algebra}. Substitution of the equation for $Dg$ shows that the other two are closed, which justifies the successive choices. Put
\[
 \Omega=-\tfrac12u_0^2-yv_1+xv_0.
\]
Directly,
\[
 D\Omega=-u_0(yt_0-xt_1)
          -y(-t_0u_0+xg)+x(-t_1u_0+yg)=0.
\]
Moreover, $\ell(\Omega)=-1/2$ and $\varphi(\Omega)=0$, whereas
$\ell(a^2)=0$ and $\varphi(a^2)=a^2$. Hence
\[
 (\ell,H^{12}(\varphi)):H^{12}(A_-)
       \longrightarrow\Q\oplus H^{12}(B)
\]
is surjective, with kernel $\mathcal U_{12}$. The quotient retained in $H^{12}(A_{12})$ therefore has basis $[a^2],[\Omega]$. Proposition~\ref{prop:completion} preserves it in $\Ahat$ and gives vanishing above degree twelve.
\end{proof}

\section{The multiplicative obstruction and realization}\label{sec:multiplication}
We use a partial relative Sullivan model as a test for a multiplicative retraction. The test need not itself resolve the quotient: after it is extended to a full relative model, any retraction would restrict to the test. The capital letters below denote algebra generators, distinct from the module cells $e_i,f_i$.

Consider the following relative Sullivan algebra over $R=(\Lambda(x,y),0)$:
\[
 T=\bigl(R\otimes\Lambda(E_0,E_1,F,G,H_0,H_1,Q),d_T\bigr).
\]
Its seven generators and differentials are
\begin{equation}\label{eq:test}
\begin{array}{c|c|l}
\text{generator}&\text{degree}&d_T\\ \hline
E_0&5&x^3\\
E_1&5&x^2y\\
F&6&yE_0-xE_1\\
G&9&E_0E_1+x^2F\\
H_0&10&-E_1F+yG\\
H_1&10&-E_0F+xG\\
Q&11&-F^2/2-yH_1+xH_0.
\end{array}
\end{equation}
The symbol $Q$ here names a generator; the ground field is $\Q$.
The first three equations satisfy $d_T^2=0$ immediately. The remaining checks are
\begin{align*}
d_T^2G
 &=x^3E_1-x^2yE_0+x^2(yE_0-xE_1)=0,\\
d_T^2H_0
 &=-x^2yF-yE_0E_1+y(E_0E_1+x^2F)=0,\\
d_T^2H_1
 &=-x^3F-xE_0E_1+x(E_0E_1+x^2F)=0,\\
d_T^2Q
 &=-yE_0F+xE_1F+yE_0F-xyG-xE_1F+xyG=0.
\end{align*}
The signs use that $E_0,E_1$ are odd and $F$ is even. Sending the seven generators to zero defines an augmentation $T\to R/\m^3$.

\begin{proposition}\label{prop:no-retraction}
The projection $\rho_2:\Ahat\to\Ahat/\Khat^3$ has no homotopy retraction as a differential algebra.
\end{proposition}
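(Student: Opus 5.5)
Assume, for a contradiction, that $\rho_2:\Ahat\to\Ahat/\Khat^3$ has a homotopy retraction as a differential algebra. The plan is to test that retraction against the relative algebra $T$ of \eqref{eq:test}. Its generators will be sent to explicit elements of $\Ahat$, determined by low-degree rigidity. The two conditions recorded in Section~\ref{sec:detector}, namely the functional $\ell$ and the split projection $\widehat\varphi$, then impose incompatible requirements on one scalar.

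\emph{Step 1 (strict retraction restricted to the test).} Form the relative Sullivan algebra $\Ahat\otimes_R T$ over $\Ahat$, with $d$ extending $D$ by $d_T$ on the seven generators. Define $\Ahat\otimes_R T\to\Ahat/\Khat^3$ as $\rho_2$ on $\Ahat$ and zero on $E_0,E_1,F,G,H_0,H_1,Q$. This is a morphism of differential algebras because each $d_T$ of a generator maps into $\Khat^3$. Indeed, $x^3$ and $x^2y$ lie in $\Khat^3$, and every other differential in \eqref{eq:test} is a combination of products with at least one generator factor.

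Factor this morphism as a relative Sullivan extension $C=\Ahat\otimes_R T\otimes\Lambda Z$ followed by a surjective quasi-isomorphism onto $\Ahat/\Khat^3$; this is the standard construction of \cite[Section~14]{FHT2001}. Then $C$ is a relative Sullivan model of $\rho_2$ containing the test. By the lifting argument recalled after Theorem~\ref{thm:CV}, the assumed retraction is represented by a morphism $g:C\to\Ahat$ with $g|_{\Ahat}=\id$. I expect this step, ensuring the test genuinely embeds in a relative model so that any retraction restricts to it, to be the main technical point. The remaining steps are degree bookkeeping.

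\emph{Step 2 (rigidity in degrees $5$ and $6$).} Since $\Ahat$ has no generators in degrees $1$ and $3$, and its relative generators of degree $\le6$ are $x,y,t_i,u_i$, we have $\Ahat^5=\Span(t_0,\dots,t_3)$. Then $D g(E_0)=x^3$ and $D g(E_1)=x^2y$ force $g(E_0)=t_0$ and $g(E_1)=t_1$.

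Next, $\Ahat^6$ is spanned by $a$, $u_i$ and $R_3$, and
\[
 D g(F)=yt_0-xt_1=Du_0 .
\]
The cocycles of $\Ahat^6$ are $\Span(a)\oplus R_3$, because $D$ is injective on $\Span(u_0,u_1,u_2)$ by the syzygy independence in Lemma~\ref{lem:initial-cone}. Hence
\[
 g(F)=u_0+\alpha a+P_3,\qquad \alpha\in\Q,\quad P_3\in R_3 .
\]

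\emph{Step 3 (two evaluations of the $Q$-equation).} Put $q=g(Q)\in\Ahat^{11}$. Then
\[
 Dq=-\tfrac12 g(F)^2-y\,g(H_1)+x\,g(H_0),
\]
with $g(H_i)\in\Ahat^{10}$.

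First apply $\ell$. By \eqref{eq:completed-properties}, $\ell$ kills $D\Ahat^{11}$ and $x\Ahat^{10}+y\Ahat^{10}$, so $\ell(g(F)^2)=0$. By the computation preceding \eqref{eq:ell}, $\ell\bigl((u_0+\alpha a+P_3)^2\bigr)=1+2\alpha$, since $P_3$ contributes only terms divisible by $x$ or $y$. Hence $\alpha=-\tfrac12$.

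Second apply $\widehat\varphi$, which is an algebra map commuting with differentials and kills $x$, $y$ and $u_0$. This gives
\[
 d\,\widehat\varphi(q)=-\tfrac12\alpha^2a^2 .
\]
Since $B^{11}=0$, the left side vanishes, so $\alpha=0$. This contradicts $\alpha=-\tfrac12$, and so no multiplicative homotopy retraction of $\rho_2$ exists.
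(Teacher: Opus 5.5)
Your proposal is correct and follows the paper's proof step for step. You embed $\Ahat\otimes_R T$ in a relative Sullivan model of $\rho_2$ to get a strict retraction fixing $\Ahat$, pin down the images of $E_0,E_1,F$ from $\Ahat^5$ and $\Ahat^6$, and then evaluate the $Q$-equation by $\ell$ (forcing $\alpha=-\tfrac12$) and by $\widehat\varphi$ (forcing $\alpha=0$). Your explicit check that sending the seven generators to zero gives a morphism into $\Ahat/\Khat^3$ is a detail the paper leaves implicit.
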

\begin{proof}
Extend scalars to obtain
\[
 \Ahat\otimes_R T\longrightarrow\Ahat/\Khat^3,
\]
fixing $\Ahat$ modulo $\Khat^3$ and sending the seven generators to zero. Complete this map to a relative Sullivan model of $\rho_2$,
\[
 \Ahat\hookrightarrow
 \Ahat\otimes_R T\otimes\Lambda U
 \xrightarrow{\simeq}\Ahat/\Khat^3.
\]
If $\rho_2$ had a homotopy retraction, that model would admit a retraction strictly under $\Ahat$. Restriction would give a morphism
\[
 f:\Ahat\otimes_R T\longrightarrow\Ahat,\qquad f|_{\Ahat}=\id.
\]
We show that no such $f$ exists.

After the initial stage, the construction adds no algebra generators of degrees at most six. Consequently
\[
 \Ahat^5=\Span(t_0,t_1,t_2,t_3),\qquad
 \Ahat^6=\Span(a,u_0,u_1,u_2)\oplus R_3.
\]
The four monomials $Dt_i$ are linearly independent, as are the three vectors $Du_i=yt_i-xt_{i+1}$. The equations for $E_0,E_1,F$ therefore force, as equalities of cochains,
\begin{equation}\label{eq:images}
 f(E_0)=t_0,\qquad f(E_1)=t_1,\qquad
 f(F)=u_0+\alpha a+P_3(x,y),
\end{equation}
where $\alpha\in\Q$ and $P_3\in R_3$. These describe all possible images; no homotopy normalization is required.

Apply $f$ to the last differential in \eqref{eq:test}:
\begin{equation}\label{eq:quadratic}
 Df(Q)=-\tfrac12 f(F)^2-yf(H_1)+xf(H_0).
\end{equation}
By \eqref{eq:completed-properties}, $\ell$ annihilates the left side and the last two terms. Every term of $f(F)^2$ involving $P_3$ contains $x$ or $y$. The coefficient of $u_0^2$ is one and that of $au_0$ is $2\alpha$. Hence
\begin{equation}\label{eq:alpha1}
 0=-\tfrac12(1+2\alpha),\qquad \alpha=-\tfrac12.
\end{equation}

On the other hand, apply $\widehat\varphi$ to the same equation \eqref{eq:quadratic}. It annihilates $x,y,u_0,P_3$, and $\widehat\varphi f(F)=\alpha a$. Since $|Q|=11$ and $B^{11}=0$, the result is the cochain equality
\begin{equation}\label{eq:alpha2}
 0=d\bigl(\widehat\varphi f(Q)\bigr)=-\tfrac12\alpha^2a^2.
\end{equation}
The monomial $a^2$ is nonzero in $\Lambda(a,b)$, so $\alpha=0$. This contradicts \eqref{eq:alpha1} and proves the proposition.
\end{proof}

\begin{proof}[Proof of Theorem~\ref{thm:main}]
Realize the minimal Sullivan algebras by setting
\[
 X=\langle(\Lambda(a,b),d)\rangle,\qquad
 Y=\langle\Ahat\rangle,\qquad p=\langle\widehat\varphi\rangle.
\]
The realization theorem for simply connected minimal Sullivan algebras of finite type identifies these with rational cochain models of $X$ and $Y$ \cite[Theorem~8.1]{Sullivan1977}; see also \cite{FHT2001}. By contravariance, the inclusion $\sigma:B\hookrightarrow\Ahat$ realizes to $r:Y\to X$, and $\widehat\varphi\sigma=\id_B$ gives $rp\simeq\id_X$.

Proposition~\ref{prop:completion} gives the stated cohomology bound and the nonzero class in degree twelve. Section~\ref{sec:completion} and Theorem~\ref{thm:CV} give $\msecat(p)=2$. Proposition~\ref{prop:no-retraction} and the same characterization give $\secat(p)>2$. This establishes the strict separation. Proposition~\ref{prop:exact-secat} in Appendix~\ref{app:exact-value} supplies the upper bound $\secat(p)\le3$. Theorem~\ref{thm:eleven-cells} and Corollary~\ref{cor:target-hyperbolic} establish the finite CW representative and its rational hyperbolicity.
\end{proof}

\appendix
\section{The exact sectional category}\label{app:exact-value}
The separation established in Sections~\ref{sec:completion} and~\ref{sec:multiplication} already disproves the conjecture. We record here the complementary upper bound $\secat(p)\le3$. It follows from a general degree argument converting a module retraction into an algebra retraction.

\begin{lemma}\label{lem:module-algebra-range}
Let $A$ be a nonnegatively graded commutative differential algebra over $\Q$, with $A^0=\Q$ and $H^{>N}(A)=0$, where $N\ge1$. Let
\[
 i:A\hookrightarrow M=(A\otimes\Lambda U,D)
\]
be a relative Sullivan extension with $U^{<r}=0$, where $r\ge1$ and $2r>N$. If $i$ has a left inverse in $\mathcal D(A)$, then $i$ admits a strict retraction of differential algebras.
\end{lemma}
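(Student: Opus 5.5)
The plan is to build the algebra retraction generator by generator along the ordered basis of $U$. The module retraction is used only at the first step, to get a retraction that is good enough in low degrees. After that, degree reasons let us modify it into a multiplicative one.

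\emph{Step 1: extract a linear retraction.} A left inverse of $i$ in $\mathcal D(A)$ is represented by an $A$-linear cochain map $\pi:M\to A$ with $\pi i=\id_A$ up to homotopy. Since $M$ is semifree over $A$ (filter by word length in $U$), we may take $\pi$ defined on all of $M$. Then $\pi(1)$ is a cocycle of degree $0$ with class $1$, so $\pi(1)=1$ after rescaling and $\pi i=\id_A$ strictly. Thus $\pi$ is an $A$-linear cochain map $M\to A$ with $\pi|_A=\id$.

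\emph{Step 2: define the algebra map on generators.} Choose the ordered basis $u_\lambda$ of $U$, all of degree $\ge r$, and define $g:M\to A$ multiplicatively with $g|_A=\id$. Set $g(u_\lambda)=\pi(u_\lambda)+c_\lambda$, where the corrections $c_\lambda$ are chosen inductively so that $d\,g(u_\lambda)=g(Du_\lambda)$. Suppose $g$ is already a cochain algebra map on $A\otimes\Lambda U_{<\lambda}$. The element $g(Du_\lambda)$ is a cocycle, and we must show it is exact.

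Write $Du_\lambda=\alpha+\beta$, where $\alpha\in A\otimes U_{<\lambda}$ is the part linear in $U$ and $\beta\in A\otimes\Lambda^{\ge2}U$.
\begin{itemize}[label=\textbullet]
\item On the linear part, $g$ and $\pi$ differ only by $\sum a\,c_\mu$.
\item On the quadratic part, every monomial in $\Lambda^{\ge2}U$ has degree $\ge 2r>N$.
\end{itemize}
So $\beta$ lies in degrees $>N$, and hence $|Du_\lambda|>N$ whenever $\beta\neq0$. Arrange the induction by degree.
\begin{itemize}[label=\textbullet]
\item If $|u_\lambda|+1>N$, then $g(Du_\lambda)$ is a cocycle in degree $>N$. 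Since $H^{>N}(A)=0$, it is exact, and we choose $g(u_\lambda)$ to be any primitive, ignoring $\pi$ entirely.
\item If $|u_\lambda|+1\le N$, then $\beta=0$, because $2r>N$ and $A^0=\Q$ forces each quadratic term to have degree $\ge2r$. Hence $Du_\lambda\in A\otimes U$. If we have kept $g=\pi$ on all generators of degree $<N$, then $g(Du_\lambda)=\pi(Du_\lambda)=d\pi(u_\lambda)$, so $g(u_\lambda)=\pi(u_\lambda)$ works.
\end{itemize}
So $g$ coincides with $\pi$ on generators of degree $\le N-1$. This is consistent, since the linear part $\alpha$ only involves earlier generators of lower degree, on which $g=\pi$. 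Multiplicativity of $g$ holds by construction: $g$ is defined as the algebra extension of its values on generators.

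\emph{Main obstacle.} The delicate point is the low-degree case. There we must know that $g$ and $\pi$ agree on all of $Du_\lambda$, not just on the generators. This requires every monomial of $Du_\lambda$ with two or more $U$-factors to vanish, which is exactly the role of $2r>N$. One also needs an $A$-coefficient times a generator to be treated $A$-linearly, which holds for both $\pi$ (it is $A$-linear) and $g$ (it is an algebra map restricting to $\id_A$).

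A subtlety at the boundary $|u_\lambda|=N-1$, where $|Du_\lambda|=N$, is handled by the same argument, since $2r>N$ still kills the quadratic part. In degrees above $N$, no compatibility with $\pi$ is needed at all because $H^{>N}(A)=0$. This yields the strict retraction $g$ of differential algebras.
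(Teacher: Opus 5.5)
\textbf{There is a genuine gap, at exactly the boundary case you flag.} You justify consistency by saying the linear part of $Du_\lambda$ ``only involves earlier generators of lower degree.'' That is not available for a general relative Sullivan extension. The definition only asks that $Du_\lambda\in A\otimes\Lambda U_{<\lambda}$ for the given ordering. A nonzero scalar multiple of an earlier generator of degree $|u_\lambda|+1$ is allowed; a contractible pair $Dv=w$ is the basic case, and it cannot be reordered by degree.

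So for $|u_\lambda|=N-1$, the differential may contain $\kappa u_\mu$ with $\kappa\in\Q^\times$ and $|u_\mu|=N$. But you chose $g(u_\mu)$ as an arbitrary primitive, ``ignoring $\pi$ entirely.'' Then $g(Du_\lambda)-\pi(Du_\lambda)$ is a degree-$N$ cocycle that need not be exact when $H^N(A)\neq0$. In that case neither $g(u_\lambda)=\pi(u_\lambda)$ nor any other choice need work.

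Concretely, let $A=(\Lambda(e),0)$ with $|e|=3$, so $N=3$. Let $r=2$ and $U=\Span(w,v)$ with $|w|=3$, $|v|=2$, $Dw=0$, $Dv=w$. The algebra map killing $w$ and $v$ is a retraction. Your recipe, however, permits $g(w)=e$, and then $dg(v)=g(Dv)=e$ has no solution because $A^2=0$.

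\textbf{The missing ingredient is control of the degree-$N$ choices modulo cocycles.} Fix a complement $L$ of $Z^N(A)$ in $A^N$. For $|u_\mu|=N$, set $g(u_\mu)=\pi(u_\mu)+c_\mu$ with $c_\mu\in L$ and $dc_\mu=g(Du_\mu)-\pi(Du_\mu)$. This is possible because the right side is a cocycle of degree $N+1$, hence exact, and $d(A^N)=d(L)$. In the boundary case the discrepancy becomes $\sum_\mu\kappa_\mu c_\mu$, a cocycle lying in $L$, hence zero. The rest of your argument then goes through.

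This is exactly what the paper's proof packages. It passes to the quasi-isomorphic truncation $q:A\to C=A/(L\oplus A^{>N})$. It observes that $q\Rhat:M\to C$ is already multiplicative, because a product of two elements of $A\otimes\Lambda^+U$ has degree at least $2r>N$. It then lifts along the surjective quasi-isomorphism $q$ using the relative Sullivan lifting property. Any such lift agrees with $\Rhat$ modulo $L\oplus A^{>N}$, which is precisely the constraint you lack.

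Your version is valid as written when each differential involves only generators of strictly smaller degree. That holds for the degree-by-degree model of $\rho_3$ in the paper's application, but not for the lemma as stated.

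Two minor points. Semifreeness of $M$ comes from the Sullivan filtration, not word length in $U$. Also, $Du_\lambda$ may have a component in $A$, which is harmless since $g$ and $\pi$ both fix $A$.
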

\begin{proof}
The underlying $A$-module of a relative Sullivan extension is semifree. The derived left inverse is therefore represented by a morphism of differential $A$-modules
\[
 \Rhat:M\longrightarrow A.
\]
The composite $\Rhat i$ induces the identity in cohomology. Since $A^0=\Q$ and there are no negative degrees, $\Rhat(1)=1$. By $A$-linearity, $\Rhat i=\id_A$ strictly.

Choose a vector space complement $L$ of $Z^N(A)$ in $A^N$, and put
\[
 J=L\oplus A^{>N},\qquad C=A/J,\qquad q:A\twoheadrightarrow C.
\]
This is a differential ideal: multiplying $L$ by a positive-degree element gives degree greater than $N$, and $D(L)\subset A^{N+1}$. It is acyclic. In degree $N$, this follows from $L\cap Z^N(A)=0$; in degree $N+1$, one has
\[
 D(L)=D(A^N)=Z^{N+1}(A)
\]
because $H^{N+1}(A)=0$. In higher degrees its cohomology agrees with that of $A$ and vanishes. Thus $q$ is a surjective quasi-isomorphism and $C^{>N}=0$.

The cochain map $q\Rhat:M\to C$ is multiplicative. Indeed, in the graded decomposition
\[
 M=A\oplus M^+,\qquad M^+=A\otimes\Lambda^+U,
\]
the second summand is concentrated in degrees at least $r$. For homogeneous $v,w\in M^+$, both $q\Rhat(vw)$ and $q\Rhat(v)q\Rhat(w)$ have degree at least $2r>N$, so both vanish. Products with a factor in $A$ are respected by $A$-linearity. Bilinearity and $\Rhat(1)=1$ give the assertion.

The commutative square of differential algebras
\[
\begin{array}{ccc}
 A&\xrightarrow{\id_A}&A\\
 {\scriptstyle i}\big\downarrow&&\big\downarrow{\scriptstyle q}\\
 M&\xrightarrow{q\Rhat}&C
\end{array}
\]
has a lift because $i$ is a relative Sullivan inclusion and $q$ is a surjective quasi-isomorphism \cite[Sections~12 and~14]{FHT2001}. The lift $f:M\to A$ satisfies $fi=\id_A$.
\end{proof}

\begin{proposition}\label{prop:exact-secat}
The map realized in Section~\ref{sec:multiplication} satisfies $\secat(p)=3$.
\end{proposition}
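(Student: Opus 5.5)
The lower bound $\secat(p)\ge3$ is already in hand: Proposition~\ref{prop:no-retraction} shows that $\rho_2$ has no homotopy retraction as a differential algebra, and Theorem~\ref{thm:CV}(i) then gives $\secat(p)>2$. It remains to show that $\rho_3:\Ahat\to\Ahat/\Khat^4$ admits a homotopy retraction as a differential algebra. I will obtain this from Lemma~\ref{lem:module-algebra-range}, applied with $A=\Ahat$ and $N=12$, using the module retraction of $\rho_2$ already constructed.

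\textbf{Step 1: a module retraction at level three.} Section~\ref{sec:completion} gives a left inverse of $\rho_2$ in $\mathcal D(\Ahat)$. Since $\Khat^4\subset\Khat^3$, the projection $\rho_2$ factors as
\[
 \Ahat\xrightarrow{\rho_3}\Ahat/\Khat^4\longrightarrow\Ahat/\Khat^3.
\]
Composing that left inverse with the second map gives a left inverse of $\rho_3$ in $\mathcal D(\Ahat)$.

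\textbf{Step 2: a relative model with no low generators.} Next I need a relative Sullivan model $\Ahat\hookrightarrow M=(\Ahat\otimes\Lambda U,D)\xrightarrow{\simeq}\Ahat/\Khat^4$ with $U^{<7}=0$, so that $2r=14>12=N$. Since $H^{>12}(\Ahat)=0$ by Proposition~\ref{prop:completion}, the hypotheses on $A$ in the lemma hold. To estimate where generators of $U$ are needed, I use the short exact sequence $0\to\Khat^4\to\Ahat\to\Ahat/\Khat^4\to0$. The map $H(\Ahat)\to H(\Ahat/\Khat^4)$ is an isomorphism in low degrees as long as $\Khat^4$ is zero there. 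Monomials with at least four relative factors have degree at least $8$, and the only ones in degree $8$ or $9$ are $R_4$ and $R_3\cdot(\text{degree-}3\text{ relative terms})$, of which there are none. So $(\Khat^4)^{\le7}=0$, which makes $H^{\le6}(\rho_3)$ an isomorphism and $H^7(\rho_3)$ injective.

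By the standard step-by-step construction of relative models \cite[Section~14]{FHT2001}, generators of $U$ are introduced only in degree $n-1$, to kill the kernel of $H^n$, or in degree $n$, to hit the cokernel of $H^n$. The first kernel is the class $[x^4]$ in degree $8$, which is killed by a generator of degree $7$. The first possible cokernel is in degree $8$ as well. This gives $r=7$ if the construction is arranged minimally.

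\textbf{Step 3: conclusion.} Lemma~\ref{lem:module-algebra-range} turns the derived left inverse of $\rho_3$ (which is also one of $i:\Ahat\to M$, by the quasi-isomorphism $M\simeq\Ahat/\Khat^4$) into a strict algebra retraction of $i$. Hence $\rho_3$ has a homotopy retraction as a differential algebra, and Theorem~\ref{thm:CV}(i) gives $\secat(p)\le3$. Combined with the lower bound, this proves $\secat(p)=3$.

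\textbf{Main obstacle.} The hardest step is Step~2, the degree bound $U^{<7}=0$. I need to check that $H^7(\Ahat/\Khat^4)$ contains nothing beyond the image of $H^7(\Ahat)$; otherwise a degree-$7$ generator would be required to hit it. If a degree-$7$ generator turns out to be needed, that is still fine, since $r=7$ is all the lemma requires. If instead the count forces a generator in degree $6$, my fallback is to compare directly with level $2$. There the analogous argument fails precisely because $\Khat^3$ starts in degree $6$ with $R_3$, so that $2r\le12$. For $\Khat^4$ this issue does not arise, since its first element is $x^4$ in degree $8$.
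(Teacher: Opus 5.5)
Your proposal is correct and follows the paper's proof essentially step for step: the lower bound from Proposition~\ref{prop:no-retraction}, the derived left inverse of $\rho_3$ obtained by composing the one for $\rho_2$ with $\Ahat/\Khat^4\to\Ahat/\Khat^3$, a relative model with $U^{<7}=0$ coming from $(\Khat^4)^{<8}=0$, and Lemma~\ref{lem:module-algebra-range} with $r=7$, $N=12$. One small correction in Step~2: $[x^4]=[D(xt_0)]$ is already zero in $H^8(\Ahat)$, and the first cokernel occurs in degree seven, not eight. So the degree-$7$ generators actually account for the five-dimensional cokernel $H^7(\Ahat/\Khat^4)\cong R_4$, spanned by $xt_0,\dots,xt_3,yt_3$, which is harmless, as you anticipated; the paper obtains the bound more cleanly by using surjectivity of $\rho_3$ and only killing kernel cohomology starting in degree eight.
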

\begin{proof}
We retain the completed notation $\Ahat$ and $\Khat=\ker\widehat\varphi$. Every relative generator in $\Khat$ has degree at least two. Hence
\begin{equation}\label{eq:fourth-power-range}
 (\Khat^4)^q=0\qquad(q<8).
\end{equation}
The projection $\rho_3:\Ahat\to\Ahat/\Khat^4$ therefore admits a relative Sullivan model
\begin{equation}\label{eq:rho-three-model}
 \Ahat\overset{i}{\hookrightarrow}
 M=(\Ahat\otimes\Lambda U,D)
 \xrightarrow[\simeq]{\theta}\Ahat/\Khat^4,
 \qquad U^{<7}=0.
\end{equation}
For completeness, this degree bound follows directly from the construction by killing kernel cohomology. The initial map is surjective and its kernel starts in degree eight. Beginning at $q=8$, adjoin generators of degree $q-1$ whose differentials represent a basis of degree-$q$ kernel cohomology, sending the new generators to zero in the quotient.

Through degree $q$, the quotient of the new kernel by the old kernel is concentrated in degree $q-1$. Its degree-$q$ term is zero because the current algebra has no degree-one part and a product of two new generators has degree $2q-2>q$. The connecting map sends the new generators to the chosen independent classes. It preserves kernel acyclicity below $q$ and kills kernel cohomology in degree $q$. The increasing union has acyclic kernel and gives \eqref{eq:rho-three-model}, with no generator below degree seven.

Let $\pi:\Ahat/\Khat^4\to\Ahat/\Khat^3$ be the natural projection. Section~\ref{sec:completion}, using Lemma~\ref{lem:derived}, gives a left inverse $s_2$ of $\rho_2$ in $\mathcal D(\Ahat)$. Since $\pi\rho_3=\rho_2$, we have
\[
 (s_2\pi\theta)i=s_2\rho_2=\id_{\Ahat}
 \quad\text{in }\mathcal D(\Ahat).
\]
Thus $i$ has a module homotopy retraction, represented by a differential $\Ahat$-module map $\Rhat:M\to\Ahat$.

Proposition~\ref{prop:completion} gives $H^{>12}(\Ahat)=0$. Applying Lemma~\ref{lem:module-algebra-range} with
\[
 r=7,\qquad N=12,\qquad 2r=14>12,
\]
produces an algebra retraction of $i$. Theorem~\ref{thm:CV} then gives $\secat(p)\le3$. The same criterion and Proposition~\ref{prop:no-retraction} give $\secat(p)>2$, so equality follows.
\end{proof}

The degree bound explains the disappearance of the remaining obstructions. The module retraction already solves the linear constraints, while products of two new generators in \eqref{eq:rho-three-model} begin in degree fourteen, beyond the cohomology of $\Ahat$. The truncation in Lemma~\ref{lem:module-algebra-range} makes this argument on cochains without identifying cohomological vanishing with the vanishing of the cochain spaces themselves.

\section{An eleven-cell target and its split Quillen model}\label{app:cellular}
We identify the rational target $Y=\langle\Ahat\rangle$ with the rationalization of a finite CW complex $Y_{\mathrm{fin}}$ having eleven cells. The proof first replaces $\Ahat$ by a finite-dimensional quotient, then computes its minimal Quillen model and the two morphisms exhibiting the splitting, and finally identifies the cell attachments. Throughout this appendix, $A,K,\Ahat,\Khat$ retain their algebraic meanings from the main text. The symbols $Y_2,Y_4,Y_8$ denote stages of a CW construction. Differentials of commutative algebras have degree $+1$; differentials of Lie models have degree $-1$. A cell of dimension $n$ gives a Lie generator of degree $n-1$, whereas a cycle of Lie degree $k$ represents a class in $\pi_{k+1}$ after rationalization.

The quotient $C$ below has total cochain dimension twenty-five. Its seven differential pairs leave eleven dimensions of cohomology, including the unit. The reduced dual gives twenty-four Lie cobar generators; eliminating seven contractible pairs leaves ten generators in the minimal Quillen model. The full differential, rather than these dimension counts alone, determines the attaching maps.

\subsection{A finite-dimensional quotient}
Retain $R=(\Lambda(x,y),0)$ and write $R_j$ for its polynomials of polynomial degree $j$, hence cohomological degree $2j$. Put
\[
 \mathcal P_7=\Span\{xt_0,xt_1,xt_2,xt_3,yt_3\}\subset\Ahat^7.
\]
The differential maps this basis to $x^4,x^3y,x^2y^2,xy^3,y^4$, so
\begin{equation}\label{eq:cell-P7}
 D:\mathcal P_7\xrightarrow{\cong}R_4.
\end{equation}
For $w\in\Ahat^{12}$, define $\eta(w)\in\Q$ by
\[
 \widehat\varphi(w)=\eta(w)a^2,
 \qquad F=(\eta,\ell):\Ahat^{12}\longrightarrow\Q^2.
\]
Here $\ell=(u_0^2)^*+(au_0)^*$ is the coefficient functional of \eqref{eq:ell}. Corollary~\ref{cor:cohomology} and Proposition~\ref{prop:completion} show that $F$ annihilates boundaries and induces an isomorphism on $H^{12}(\Ahat)$. More explicitly,
\begin{equation}\label{eq:cell-detectors}
 F(a^2)=(1,0),\qquad F(\Omega)=(0,-1/2),\qquad
 Z^{12}(\Ahat)\cap\ker F=D\Ahat^{11}.
\end{equation}

\begin{lemma}\label{lem:finite-cell-quotient}
The graded subspace $\mathcal J\subset\Ahat$ defined by
\begin{equation}\label{eq:cell-ideal}
 \mathcal J^q=
 \begin{cases}
 0,&q\le6,\\
 \mathcal P_7,&q=7,\\
 R_4\oplus\Span\{xu_i,yu_i:0\le i\le2\},&q=8,\\
 \Ahat^q,&9\le q\le11,\\
 \ker F,&q=12,\\
 \Ahat^q,&q\ge13
 \end{cases}
\end{equation}
is an acyclic differential ideal. Consequently $q_C:\Ahat\twoheadrightarrow C=\Ahat/\mathcal J$ is a quasi-isomorphism.
\end{lemma}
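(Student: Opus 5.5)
We verify three properties of $\mathcal J$ in turn: it is closed under $D$, it is an ideal, and it is acyclic. Then the short exact sequence $0\to\mathcal J\to\Ahat\to C\to0$ shows that $q_C$ is a quasi-isomorphism. Throughout we use the low-degree description of $\Ahat$. The relative generators in degrees at most eight are $x,y$ (degree 2), $t_i$ (degree 5) and $u_i$ (degree 6), and the base contributes only $a$ below degree seventeen. Hence
\[
 \Ahat^7=\Span\{xt_i,yt_i\},\qquad
 \Ahat^8=R_4\oplus\Span\{xu_i,yu_i,ax,ay\}.
\]

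\emph{Differential ideal.} By \eqref{eq:cell-P7}, $D\mathcal P_7=R_4\subset\mathcal J^8$. Next, $D(xu_i)=x(yt_i-xt_{i+1})$ and $D(yu_i)$ lie in $\Ahat^9=\mathcal J^9$, and $DR_4=0$. Degrees $9$ and $10$ map into the full spaces $\mathcal J^{10}$ and $\mathcal J^{11}$. For degree $11$, we have $D\Ahat^{11}\subset\ker F$, because $\widehat\varphi D=d\widehat\varphi$, $B^{11}=0$, and $\ell$ kills boundaries by \eqref{eq:completed-properties}. Finally, $D(\ker F)\subset\Ahat^{13}=\mathcal J^{13}$.

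For the ideal property it suffices to multiply by generators of positive degree. Elements of $\mathcal J$ of degree at most $8$ multiplied by generators land in degree $\ge9$, except in the following cases, which we check by hand.
\begin{itemize}
\item $x\mathcal P_7,\ y\mathcal P_7$ land in degree $9$, where $\mathcal J^9$ is everything.
\item Products of degree-$7$ or degree-$8$ elements with $a$, $t_i$, $u_i$ land in degree $\ge12$.
\end{itemize}
The only real constraint is in degree $12$: we need $\mathcal J^{q}\cdot\Ahat^{12-q}\subset\ker F$.
\begin{itemize}
\item For $\ell$: \eqref{eq:completed-properties} gives $\ell(x\Ahat^{10}+y\Ahat^{10})=0$. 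Every element of $\mathcal J^7,\mathcal J^8,\mathcal J^9,\mathcal J^{10}$ times something of the complementary degree yields monomials without a $u_0^2$ or $au_0$ term. Indeed, the only degree-$12$ monomials detected by $\ell$ are $u_0^2$ and $au_0$, and each factorizes only through degree-$6$ factors, while $\mathcal J^6=0$.
\item For $\eta$: $\widehat\varphi$ kills $\mathcal J^{\ge7}\cap\Ahat^{<12}$ times anything, except for a possible pure base factor. But there are no base elements in degrees $7$–$11$ other than $0$ ($B^{7..11}=0$), so every element of $\mathcal J^q$, $7\le q\le11$, lies in $\Khat$, and $\widehat\varphi$ of the product vanishes.
\end{itemize}
Degrees $\ge13$ are automatic.

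\emph{Acyclicity.}
\begin{itemize}
\item Degree $7$: $D$ is injective on $\mathcal P_7$.
\item Degree $8$: $\mathcal J^8\cap\ker D=R_4\oplus(\ker D\cap\Span\{xu_i,yu_i\})$. A cocycle in the span would give a relation $\sum(\alpha_ix+\beta_iy)(yt_i-xt_{i+1})=0$. This is a second syzygy among the consecutive syzygies of $\m^3$, and the resolution in Lemma~\ref{lem:initial-cone} says there is none. (Alternatively, cocycles here would give classes $[xu_i]$, contradicting the degree-$8$ cohomology in Corollary~\ref{cor:cohomology}.) So the cocycles are exactly $R_4=D\mathcal P_7$.
\item Degree $9$: a cocycle $z\in\Ahat^9$ is exact in $\Ahat$ because $H^9(\Ahat)=0$, say $z=Dw$ with $w\in\Ahat^8$. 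We must adjust $w$ into $\mathcal J^8$. The component of $w$ along $\Span\{ax,ay\}$ is closed, and by Corollary~\ref{cor:cohomology} its class $[ax],[ay]$ is nonzero. Since $D(ax)=D(ay)=0$, we may discard this component without changing $Dw$, which lands $w$ in $\mathcal J^8$.
\item Degrees $10$ and $11$: $H^{10}=H^{11}=0$, and every primitive lies in $\Ahat^{9}$ or $\Ahat^{10}$, which are entirely contained in $\mathcal J$.
\item Degree $12$: \eqref{eq:cell-detectors} gives $Z^{12}\cap\ker F=D\Ahat^{11}=D\mathcal J^{11}$.
\item Degree $13$: a cocycle is $Dw$ with $w\in\Ahat^{12}$. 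Since $D$ is zero on the cocycle part, $F(w)$ is defined, and we can subtract a combination of $a^2$ and $\Omega$ to make $F(w)=0$ without changing $Dw$.
\item Degrees $\ge14$: $H(\Ahat)=0$ there, and all primitives lie in $\mathcal J$.
\end{itemize}

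The main obstacle is the degree-$12$ ideal check, specifically confirming that no product $\mathcal J^q\cdot\Ahat^{12-q}$ with $7\le q\le10$ produces a $u_0^2$ or $au_0$ term or an $a^2$ term. This relies on the degree enumeration of monomials together with \eqref{eq:completed-properties}.
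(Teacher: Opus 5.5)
Your proof is correct and follows the paper's argument. Closure under $D$ is handled the same way, and so is the degree-by-degree acyclicity: injectivity on $\mathcal P_7$ and on $R_1\Span(u_0,u_1,u_2)$, discarding the closed $ax,ay$ components in degree nine, $H^{10}=H^{11}=0$, \eqref{eq:cell-detectors} in degree twelve, and subtracting a combination of $a^2$ and $\Omega$ in degree thirteen. The only variation is in the degree-twelve ideal check. There you use monomial divisibility ($u_0^2$ and $au_0$ have no monomial divisor of degree $7$ through $10$) for $\ell$, and $B^7=\cdots=B^{11}=0$ for $\eta$. The paper instead observes that the relevant products lie in $x\Ahat^{10}+y\Ahat^{10}$. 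Both justifications are valid.
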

\begin{proof}
Equation~\eqref{eq:cell-P7} and the fact that $F$ annihilates boundaries give $D\mathcal J\subset\mathcal J$. For multiplication, the components in degrees nine through eleven and above twelve are full. Since $\Ahat^1=\Ahat^3=0$, the only additional checks concern products of total degree twelve:
\[
 \mathcal J^7\Ahat^5,\qquad
 \mathcal J^8\Ahat^4,\qquad
 \mathcal J^{10}\Ahat^2.
\]
Each is contained in $x\Ahat^{10}+y\Ahat^{10}$. Indeed, $\Ahat^5=\Span(t_0,t_1,t_2,t_3)$, $\Ahat^4=R_2$, and $\Ahat^2=R_1$. Both $\eta$ and $\ell$ annihilate this subspace by \eqref{eq:completed-properties}. This proves the ideal property.

We verify acyclicity degree by degree. In degree seven, $D|_{\mathcal P_7}$ is injective. The map
\[
 (f_0,f_1,f_2)\longmapsto
 \sum_{i=0}^2 f_i(yt_i-xt_{i+1})
\]
is injective over $R$: the coefficients of $t_0,t_1,t_2$ successively force $f_0,f_1,f_2$ to vanish. Thus $D$ is injective on $R_1\Span(u_0,u_1,u_2)$, and $Z^8(\mathcal J)=R_4=D\mathcal J^7$. Also
\[
 \Ahat^8=\mathcal J^8\oplus\Span(ax,ay),\qquad
 D(ax)=D(ay)=0.
\]
Since $H^9(\Ahat)=0$, this gives $Z^9(\mathcal J)=D\mathcal J^8$. In degrees ten and eleven, acyclicity follows from $H^{10}(\Ahat)=H^{11}(\Ahat)=0$. In degree twelve, \eqref{eq:cell-detectors} gives $Z^{12}(\mathcal J)=D\mathcal J^{11}$.

If $z\in Z^{13}(\mathcal J)$, then $z=Dw$ for some $w\in\Ahat^{12}$ because $H^{13}(\Ahat)=0$. Subtracting a suitable linear combination of the cocycles $a^2$ and $\Omega$ makes $F(w)=0$ without changing $Dw$. Hence $z\in D\mathcal J^{12}$. In higher degrees, the complex agrees with $\Ahat$, whose cohomology vanishes. This proves the lemma.
\end{proof}

The quotient has dimension twenty-five. Using the same letters for the images of the low-degree generators, a homogeneous basis is
\begin{equation}\label{eq:finite-C-basis}
\begin{array}{c|l}
q&\text{basis of }C^q\\ \hline
0&1\\
2&x,y\\
4&x^2,xy,y^2\\
5&t_0,t_1,t_2,t_3\\
6&a,u_0,u_1,u_2,x^3,x^2y,xy^2,y^3\\
7&yt_0,yt_1,yt_2\\
8&ax,ay\\
12&\zeta=a^2,\quad\omega=u_0^2=au_0.
\end{array}
\end{equation}
All omitted components are zero. Its nonzero differentials on the displayed basis are
\begin{equation}\label{eq:finite-C-differential}
 Dt_i=x^{3-i}y^i\quad(0\le i\le3),\qquad
 Du_i=yt_i\quad(0\le i\le2).
\end{equation}
Besides the polynomial products through $R_3$, the nonzero products of positive-degree basis elements are $yt_i$ for $i\le2$, $ax,ay$, and
\begin{equation}\label{eq:finite-C-top-products}
 a^2=\zeta,\qquad au_0=u_0^2=\omega.
\end{equation}
These statements specify multiplication by graded commutativity and bilinearity. For example, $xt_i=yt_3=xu_i=yu_i=0$. Formula~\eqref{eq:finite-C-top-products} follows directly from $F(a^2)=(1,0)$ and $F(au_0)=F(u_0^2)=(0,1)$; all other products of total degree twelve have both coefficients zero. Notice that $\omega$ is a cocycle in $C$, although $u_0^2$ need not be a cocycle in $\Ahat$. In fact $q_C(\Omega)=-\omega/2$.

\subsection{The minimal Quillen model}
Write $\mathbb L(V)$ for the free graded Lie algebra on $V$. We use the Lie cobar construction on the dual coalgebra of a finite-dimensional commutative algebra; see \cite[Sections~1.2.2 and~2.1]{BFMT2020}. For a homogeneous element $z$ of the reduced dual coalgebra, put $v_z=s^{-1}z$. We use the homological differential $\partial$ defined by $(\partial z)(w)=z(Dw)$ and the Lie cobar convention of \cite[Section~2.1, p.~54]{BFMT2020}. With reduced coproduct $\overline\Delta z=\sum z'\otimes z''$, this gives
\begin{equation}\label{eq:Lie-cobar-sign}
 d v_z=-v_{\partial z}
       +\tfrac12\sum(-1)^{|z'|}[v_{z'},v_{z''}].
\end{equation}
Thus the dual of a square of an even cochain contributes one half of a Lie square.

\begin{proposition}\label{prop:finite-Lie-model}
A minimal Quillen model of $Y$ is
\[
 L_Y=\bigl(\mathbb L(\alpha,\beta,e_0,e_1,e_2,c,f_0,f_1,g,h),d\bigr),
\]
with degrees
\[
 |\alpha|=|\beta|=1,\quad |e_i|=3,\quad |c|=5,
 \quad |f_i|=7,\quad |g|=|h|=11.
\]
Set $\lambda=[\beta,[\alpha,e_0]]$. The differential is
\begin{equation}\label{eq:ten-generator-Lie}
\begin{aligned}
 d\alpha=d\beta=dc&=0,\\
 de_0&=\tfrac12[\alpha,\alpha],&
 de_1&=[\alpha,\beta],&
 de_2&=\tfrac12[\beta,\beta],\\
 df_0&=[\alpha,c],&df_1&=[\beta,c],\\
 dg&=\tfrac12[c,c],&
 dh&=[c,\lambda]+\tfrac12[\lambda,\lambda].
\end{aligned}
\end{equation}
\end{proposition}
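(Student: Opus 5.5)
The plan is to compute the Lie cobar construction $\mathcal L(C^\vee)$ on the reduced dual coalgebra of the finite quotient $C$ of Lemma~\ref{lem:finite-cell-quotient}, and then minimize it by eliminating contractible pairs. Since $q_C:\Ahat\to C$ is a quasi-isomorphism of finite-dimensional, simply connected commutative algebras, $\mathcal L(C^\vee)$ is a Quillen model of $Y$ \cite{BFMT2020}.

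\emph{Cobar generators.} The reduced dual has twenty-four basis elements dual to \eqref{eq:finite-C-basis}. Write $v_x,v_y$ (degree $1$), $v_{x^2},v_{xy},v_{y^2}$ (degree $3$), $v_{t_i}$ (degree $4$), $v_a,v_{u_i},v_{R_3}$ (degree $5$), $v_{yt_i}$ (degree $6$), $v_{ax},v_{ay}$ (degree $7$), and $v_\zeta,v_\omega$ (degree $11$).

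Formula \eqref{eq:Lie-cobar-sign} is read off from the dual of \eqref{eq:finite-C-differential} and of the product table \eqref{eq:finite-C-top-products}. The linear parts are $dv_{x^{3-i}y^i}=\pm v_{t_i}$ and $dv_{yt_i}=\pm v_{u_i}$ for $i\le2$. These seven pairs $(v_{R_3},v_{t_i})$ and $(v_{yt_i},v_{u_i})$ are exactly the contractible pairs. The quadratic parts come from the coproducts:
\begin{itemize}
\item $\overline\Delta(x^2)^\vee$ gives $\tfrac12[v_x,v_x]$, and similarly for $xy$ and $y^2$;
\item $(ax)^\vee$ gives $[v_x,v_a]$ up to sign, and $(ay)^\vee$ gives $[v_y,v_a]$;
\item $\zeta^\vee$ gives $\tfrac12[v_a,v_a]$;
\item $\omega^\vee$ gives $[v_a,v_{u_0}]+\tfrac12[v_{u_0},v_{u_0}]$, using $au_0=u_0^2=\omega$;
\item $(yt_i)^\vee$ gives brackets $[v_y,v_{t_i}]$, and $(x^{3-i}y^i)^\vee$ gives brackets of degree-$1$ and degree-$3$ generators.
\end{itemize}

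\emph{Minimization.} Apply the standard elimination of contractible pairs, i.e.\ the homotopy transfer or change of generators $v_{t_i}\mapsto dv_{R_3}$. This removes the $14$ generators and leaves ten. Relabel them $\alpha=v_x$, $\beta=v_y$, $e_0,e_1,e_2$ for the duals of $x^2,xy,y^2$, $c=v_a$, $f_0,f_1$ for the duals of $ax,ay$, $g=v_\zeta$ and $h=v_\omega$.

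The differentials of $e_i$, $f_i$ and $g$ come out directly, after rescaling signs. The differential of $h$ contains $[v_a,v_{u_0}]+\tfrac12[v_{u_0},v_{u_0}]$, and $v_{u_0}$ must be replaced by its image under the elimination. Since $v_{u_0}$ is paired with $v_{yt_0}$, whose differential is $\pm v_{u_0}\pm[v_y,v_{t_0}]$, we substitute $v_{u_0}\equiv\mp[v_y,v_{t_0}]$. Then $v_{t_0}$ is in turn paired with $v_{x^3}$, whose differential contains $\tfrac12[v_x,v_{x^2}]$-type terms; with the correct coefficients from $\overline\Delta(x^3)^\vee$ this gives $v_{t_0}\equiv\pm[\alpha,e_0]$. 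Hence $v_{u_0}\mapsto\pm[\beta,[\alpha,e_0]]=\pm\lambda$, and after a sign adjustment of $\lambda$ (or of $h$) we obtain $dh=[c,\lambda]+\tfrac12[\lambda,\lambda]$.

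The remaining checks are that each resulting differential is decomposable, so the model is minimal, and that $d^2=0$. The latter can be verified directly: for instance $d\lambda=[\beta,[\alpha,\tfrac12[\alpha,\alpha]]]=0$ by Jacobi, and $d[c,\lambda]=0$.

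\emph{Main obstacle.} The delicate step is tracking the transferred differential through the two successive eliminations, $v_{u_0}\to[v_y,v_{t_0}]$ and then $v_{t_0}\to[\alpha,e_0]$. Two things must be controlled: no higher-order correction terms survive, and the coefficients and signs are exactly as claimed.

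For the first, I would use degree considerations: all other potential corrections land in brackets of length at least four with the wrong degree. I would also argue via a filtration by bracket length, using that $C$ has zero products in degrees $7$–$11$ except those listed.

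For the second, I would fix the signs from \eqref{eq:Lie-cobar-sign} once and allow rescaling of $e_i$, $f_i$, $g$, $h$ and $\lambda$ to absorb them. The only coefficient that is not a matter of convention is the relative factor $\tfrac12$ between $[c,\lambda]$ and $[\lambda,\lambda]$. It comes from $au_0=u_0^2=\omega$ together with the convention that the dual of an even square contributes one half of a Lie square.
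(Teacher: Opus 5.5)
Your proposal is correct and follows essentially the same route as the paper: the Lie cobar construction on $C^\vee$, elimination of the same seven contractible pairs, and the substitution $U_0\mapsto\lambda=[\beta,[\alpha,e_0]]$ in $dh_0=[c,U_0]+\tfrac12[U_0,U_0]$. The paper makes this substitution exact through the triangular change $T_i'=T_i-B_i$, $S_i'=S_i-[\beta,\mathsf r_i]$, $U_i'=U_i-\lambda_i$ and the corrected top generator $h=h_0-[c,S_0']-[\lambda,S_0']+\tfrac12[S_0',U_0']$, so no higher-order corrections arise and your planned degree argument is unnecessary (as phrased it would fail anyway, since $[c,\lambda]$ is itself a length-four bracket of degree ten).
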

\begin{proof}
Apply \eqref{eq:Lie-cobar-sign} to $C^\vee$. Its reduced part has dimension twenty-four, so the Lie cobar has twenty-four generators: ten will remain, and fourteen will form seven contractible pairs. The generators corresponding to $x,y,x^2,xy,y^2,a,ax,ay,\zeta,\omega$ are denoted by $\alpha,\beta,e_0,e_1,e_2,c,f_0,f_1,g,h_0$, respectively. The remaining generators are
\[
 T_i\ (|T_i|=4),\quad \mathsf r_i\ (|\mathsf r_i|=5)\quad(0\le i\le3),
 \qquad U_i\ (|U_i|=5),\quad S_i\ (|S_i|=6)\quad(0\le i\le2),
\]
corresponding to $t_i,x^{3-i}y^i,u_i,yt_i$. Define the quadratic Lie words
\begin{equation}\label{eq:Lie-Bi}
\begin{aligned}
 B_0&=[\alpha,e_0],&
 B_1&=[\beta,e_0]+[\alpha,e_1],\\
 B_2&=[\beta,e_1]+[\alpha,e_2],&
 B_3&=[\beta,e_2].
\end{aligned}
\end{equation}
The coproduct dual to the multiplication in \eqref{eq:finite-C-basis} gives
\begin{equation}\label{eq:Lie-before-cancellation}
\begin{aligned}
 d\mathsf r_i&=-T_i+B_i,&dT_i&=0,\\
 dS_i&=-U_i+[\beta,T_i],&dU_i&=0,\\
 dh_0&=[c,U_0]+\tfrac12[U_0,U_0].
\end{aligned}
\end{equation}
The differentials of $\alpha,\beta,e_i,c,f_i,g$ are already those of \eqref{eq:ten-generator-Lie}. The linear terms in \eqref{eq:Lie-before-cancellation} come from \eqref{eq:finite-C-differential}; the two terms in $dh_0$ come from $au_0=u_0^2=\omega$. These lists exhaust the nonzero products and differentials of $C$.

The graded Jacobi identity gives $dB_i=0$. Make the triangular changes
\[
 T'_i=T_i-B_i,\qquad
 \lambda_i=[\beta,B_i],\qquad U'_i=U_i-\lambda_i,
 \qquad S'_i=S_i-[\beta,\mathsf r_i],
\]
with the indicated ranges of indices. They give
\[
 d\mathsf r_i=-T'_i,\quad dT'_i=0,\qquad
 dS'_i=-U'_i,\quad dU'_i=0.
\]
In particular, $\lambda_0=\lambda$ and $d\lambda=0$. Finally put
\begin{equation}\label{eq:Lie-change-top}
 h=h_0-[c,S'_0]-[\lambda,S'_0]+\tfrac12[S'_0,U'_0].
\end{equation}
Using $|c|=|\lambda|=|U'_0|=5$ and $|S'_0|=6$, the derivation rule gives
\[
 dh=[c,\lambda]+\tfrac12[\lambda,\lambda].
\]
For instance, $d[c,S'_0]=[c,U'_0]$ and $d[S'_0,U'_0]=-[U'_0,U'_0]$, which cancel exactly the terms containing $U'_0$.

The resulting free differential Lie algebra is the free product of the displayed $L_Y$ with the free Lie algebra on seven contractible pairs. Here is an explicit contraction of the kernel of projection onto $L_Y$. Define a degree-$+1$ Lie derivation $H$ by
\[
 H(T'_i)=-\mathsf r_i,\qquad H(U'_i)=-S'_i,\qquad
 H(\mathsf r_i)=H(S'_i)=0,\qquad H|_{L_Y}=0.
\]
The degree-zero derivation $dH+Hd$ is the identity on every generator belonging to a contractible pair and is zero on the generators of $L_Y$. It therefore multiplies a Lie word containing $k$ letters from the pairs by $k$. Both $d$ and $H$ preserve this number. On the subspace with $k>0$, the map $H/k$ is a contraction. Thus the kernel is acyclic and the projection is a quasi-isomorphism. It follows that $L_Y$ models $C$, and hence $\Ahat$, by Lemma~\ref{lem:finite-cell-quotient}. Every differential in \eqref{eq:ten-generator-Lie} is decomposable, so this model is minimal.
\end{proof}

\subsection{The inclusion and projection in Quillen models}
The source admits the finite formal model
\[
 B_{\mathrm f}=\bigl(\Lambda(\bar a)/(\bar a^3),0\bigr),\qquad |\bar a|=6,
\]
with quasi-isomorphism $\vartheta:B\to B_{\mathrm f}$ given by $a\mapsto\bar a$ and $b\mapsto0$. Its Quillen model is
\[
 L_X=\bigl(\mathbb L(c,g),d\bigr),\qquad
 |c|=5,\quad |g|=11,\quad dc=0,\quad dg=\tfrac12[c,c].
\]

\begin{proposition}\label{prop:Quillen-splitting}
The maps $p:X\to Y$ and $r:Y\to X$ of Theorem~\ref{thm:main} are represented by
\begin{equation}\label{eq:Quillen-split-diagram}
 L_X\xrightarrow{\ \iota\ }L_Y\xrightarrow{\ \pi\ }L_X,
 \qquad \pi\iota=\id_{L_X},
\end{equation}
where $\iota$ is the inclusion on $c,g$ and
\begin{equation}\label{eq:Quillen-projection}
 \pi(c)=c,\quad\pi(g)=g,\qquad
 \pi(\alpha)=\pi(\beta)=\pi(e_i)=\pi(f_i)=\pi(h)=0.
\end{equation}
In particular the splitting is strict in differential Lie algebras.
\end{proposition}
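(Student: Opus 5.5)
The plan is to obtain both Lie morphisms by functoriality of the Lie cobar construction, applied to dg algebra maps between the finite models $C$ and $B_{\mathrm f}$. I then transport them through the cancellation of contractible pairs in Proposition~\ref{prop:finite-Lie-model}. On the Sullivan side, $p$ is realized by $\widehat\varphi:\Ahat\to B$ and $r$ by the inclusion $\sigma:B\hookrightarrow\Ahat$. The Quillen model is covariant in spaces and is obtained from the reduced dual coalgebra, so $p$ should correspond to the cobar image of the dual of a map $C\to B_{\mathrm f}$. Likewise $r$ should correspond to the dual of a map $B_{\mathrm f}\to C$.

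\emph{Step 1: descend the maps to the finite models.}
\begin{itemize}
\item Define $\varphi_C:C\to B_{\mathrm f}$ by $a\mapsto\bar a$ and $\zeta\mapsto\bar a^2$, with every other basis element of \eqref{eq:finite-C-basis} sent to $0$; in particular $\omega\mapsto0$.
\item Check that $\vartheta\widehat\varphi$ kills $\mathcal J$. In degrees $7,8$, $\mathcal J$ consists of relative monomials. In degrees $9$ through $11$ and $\ge13$, $B_{\mathrm f}$ vanishes. In degree $12$, $\widehat\varphi(w)=\eta(w)a^2$ and $\eta$ vanishes on $\ker F$.
\item Hence $\varphi_Cq_C=\vartheta\widehat\varphi$, and $\varphi_C$ is multiplicative because $\widehat\varphi$ is.
\item Define $\sigma_C:B_{\mathrm f}\to C$ by $\bar a\mapsto a$ and $\bar a^2\mapsto\zeta$. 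Using $\bar a^3=0$ and $C^{18}=C^{17}=0$, verify $q_C\sigma=\sigma_C\vartheta$.
\item Then $\varphi_C\sigma_C=\id$ follows from $\widehat\varphi\sigma=\id_B$.
\end{itemize}
Since $q_C$ and $\vartheta$ are quasi-isomorphisms, $\varphi_C$ and $\sigma_C$ represent $p$ and $r$.

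\emph{Step 2: dualize and apply the cobar construction.} Compute the induced maps on the twenty-four-generator cobar model.
\begin{itemize}
\item The dual of $\varphi_C$ sends $v_{\bar a}\mapsto c$ and $v_{\bar a^2}\mapsto g$.
\item The dual of $\sigma_C$ sends $v_a\mapsto c$ and $v_\zeta\mapsto g$. It sends all other generators to $0$, including $v_\omega=h_0$, because $\omega^*\circ\sigma_C=0$.
\end{itemize}
Now compose with the quasi-isomorphisms relating the full cobar model to $L_Y$.
\begin{itemize}
\item For $\iota$, compose with the projection onto $L_Y$ that kills the seven pairs. The images $c,g$ are unchanged by the triangular changes of variables, so this composite is the inclusion on $c,g$.
\item For $\pi$, precompose with the inclusion $L_Y\hookrightarrow$ full model given by the new variables. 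This inclusion is a dg map, since the full model is the free product of $L_Y$ with the contractible pairs. Every generator other than $c,g$ goes to $0$. In particular $h=h_0-[c,S'_0]-[\lambda,S'_0]+\tfrac12[S'_0,U'_0]\mapsto0$, because $h_0$, $S'_0$ and $\lambda$ all map to $0$.
\end{itemize}
This yields exactly \eqref{eq:Quillen-projection}.

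\emph{Step 3: direct verification.} As a sanity check, I would also verify directly that $\iota$ and $\pi$ commute with differentials.
\begin{itemize}
\item For $\iota$, this is immediate, since $dc=0$ and $dg=\tfrac12[c,c]$ hold in both $L_X$ and $L_Y$.
\item For $\pi$, the generators $e_i$, $f_i$ and $h$ have differentials built from $\alpha$, $\beta$ and $\lambda$, and these all map to $0$.
\item On generators, $\pi\iota=\id$.
\end{itemize}

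The main obstacle is Step 2's bookkeeping. One must confirm that the projection onto $L_Y$ and the inclusion of $L_Y$ are homotopy inverse quasi-isomorphisms compatible with the identifications. Then the composites represent the same homotopy classes as the cobar images. This needs the contraction $H/k$ of Proposition~\ref{prop:finite-Lie-model} together with the contravariance conventions. I would handle it by noting that projection$\,\circ\,$inclusion is the identity of $L_Y$.
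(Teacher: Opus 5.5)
Your proposal is correct and follows the paper's proof essentially step for step. You descend $\widehat\varphi$ and $\sigma$ to $C\to B_{\mathrm f}$ and $B_{\mathrm f}\to C$ (checking that $\vartheta\widehat\varphi$ kills $\mathcal J$ via $\eta$ in degree twelve), apply the Lie cobar construction to their duals, and observe that every correction in the changes of generators contains a generator sent to zero; your Step~2 simply makes the free-product projection and inclusion of $L_Y$ more explicit than the paper does.
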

\begin{proof}
The composite $\vartheta\widehat\varphi$ annihilates $\mathcal J$, including its degree-twelve part by the condition $\eta=0$. It therefore descends to $\bar\varphi:C\to B_{\mathrm f}$. The assignment $\bar\sigma(\bar a)=q_C(a)$ defines a morphism $\bar\sigma:B_{\mathrm f}\to C$, since $C^{18}=0$. With $\sigma:B\hookrightarrow\Ahat$, the identities
\[
 \bar\varphi q_C=\vartheta\widehat\varphi,
 \qquad q_C\sigma=\bar\sigma\vartheta,
 \qquad\bar\varphi\bar\sigma=\id_{B_{\mathrm f}}
\]
identify these morphisms with the original split Sullivan representative.

Dualization and the Lie cobar construction turn $\bar\varphi$ into the inclusion on the generators dual to $\bar a,\bar a^2$, namely $c,g$. They turn $\bar\sigma$ into the projection retaining only $c,g$. The changes of generators in the proof of Proposition~\ref{prop:finite-Lie-model} preserve these maps: every correction contains a generator annihilated by the projection. This proves that \eqref{eq:Quillen-split-diagram} represents $p,r$.

The differential identities can also be checked directly. The inclusion respects $dg=\tfrac12[c,c]$, while $\pi(\lambda)=0$ gives $\pi(dh)=0=d\pi(h)$. All other equations follow immediately from \eqref{eq:ten-generator-Lie}. The composite fixes both generators of $L_X$.
\end{proof}

\subsection{Cell attachments and a finite CW representative}
We use the correspondence between a cell of dimension $n$ and a Lie generator of degree $n-1$, whose differential represents its attaching map; see \cite[p.~117]{BFMT2020}. The following construction is first made in the rational homotopy category. An ordinary finite CW representative is obtained at the end of the proof.

\begin{theorem}\label{thm:eleven-cells}
There is a simply connected finite CW complex $Y_{\mathrm{fin}}$ of dimension twelve with eleven cells, distributed as follows:
\begin{equation}\label{eq:cell-count}
\begin{array}{c|rrrrrr}
 \text{dimension}&0&2&4&6&8&12\\ \hline
 \text{number of cells}&1&2&3&1&2&2.
\end{array}
\end{equation}
Its rationalization is homotopy equivalent to $Y$. More precisely, set
\begin{equation}\label{eq:cell-skeleta}
\begin{aligned}
 Y_2&=S^2\vee S^2,\\
 Y_4&=(\mathbb{CP}^2\times\mathbb{CP}^2)^{(4)},\\
 Y_8&=Y_4\cup_{Y_2}(Y_2\times S^6)
      =(Y_4\times S^6)^{(8)}.
\end{aligned}
\end{equation}
Let $c\in\pi_6(Y_8)\otimes\Q$ be the class of the $S^6$ factor. Let $\xi\in\pi_5(Y_4)\otimes\Q$ come from the first $\mathbb{CP}^2$ factor and be normalized to correspond to the Lie cycle $[\alpha,e_0]$. Put $\lambda=[\beta,\xi]\in\pi_6(Y_4)\otimes\Q$, and also denote its image in $\pi_6(Y_8)\otimes\Q$ by $\lambda$. Then $Y$ is obtained rationally from $Y_8$ by two twelve-dimensional cells with attaching classes
\begin{equation}\label{eq:top-cell-attachments}
 \theta_0=\tfrac12[c,c],\qquad
 \theta_1=\tfrac12[c+\lambda,c+\lambda]
 \quad\text{in }\pi_{11}(Y_8)\otimes\Q.
\end{equation}
These are rational attaching classes; ordinary integral multiples are chosen at the end of the proof to construct $Y_{\mathrm{fin}}$.
\end{theorem}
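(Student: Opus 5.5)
The plan is to read the cell structure off the minimal Quillen model $L_Y$ of Proposition~\ref{prop:finite-Lie-model}. Its ten generators have degrees $1,1,3,3,3,5,7,7,11,11$, which are exactly the dimensions $2,2,4,4,4,6,8,8,12,12$ lowered by one. Together with the zero-cell this gives the count \eqref{eq:cell-count}. I will build the skeleta in increasing dimension and match each stage with a sub-Lie algebra of $L_Y$, using the correspondence that a cell of dimension $n$ gives a generator of degree $n-1$ whose differential represents its attaching class \cite[p.~117]{BFMT2020}.

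First, $\mathbb L(\alpha,\beta,e_0,e_1,e_2)$ with $de_0=\tfrac12[\alpha,\alpha]$, $de_1=[\alpha,\beta]$ and $de_2=\tfrac12[\beta,\beta]$ is the standard minimal Quillen model of the four-skeleton of $\mathbb{CP}^2\times\mathbb{CP}^2$. Indeed, the Lie cobar construction on the dual of $\Q[x,y]/(x,y)^3$, which is the four-skeleton's cohomology, gives exactly these formulas by \eqref{eq:Lie-cobar-sign}, and $Y_4$ is formal, being a skeleton of a formal space through its top degree. So this subalgebra models $Y_4$.

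Next I adjoin $c$ with $dc=0$, which gives the wedge $Y_4\vee S^6$, and then $f_0,f_1$ with $df_0=[\alpha,c]$ and $df_1=[\beta,c]$. These attach the two eight-cells by Whitehead products $[\iota_2,\iota_6]$, so the result models $Y_4\cup_{Y_2}(Y_2\times S^6)=(Y_4\times S^6)^{(8)}$. Both descriptions of $Y_8$ in \eqref{eq:cell-skeleta} agree by construction: the product cells of $Y_2\times S^6$ are the $2$-cells times the $6$-cell.

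In this Lie model, the class $\xi$ corresponds to the cycle $[\alpha,e_0]$, which is closed by Jacobi, as in the proof of Proposition~\ref{prop:finite-Lie-model}. The Lie cycle $\lambda=[\beta,[\alpha,e_0]]$ therefore represents $[\beta,\xi]$, using that Lie brackets of cycles correspond to Samelson, that is Whitehead, products under the Quillen equivalence. The differentials
\[
 dg=\tfrac12[c,c],\qquad dh=[c,\lambda]+\tfrac12[\lambda,\lambda]
\]
then represent the attaching classes of the two twelve-cells. These classes need not be $\theta_0,\theta_1$ themselves. I will change the basis of the top generators by replacing $h$ with $h'=g+h$. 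Then
\[
 dh'=\tfrac12[c,c]+[c,\lambda]+\tfrac12[\lambda,\lambda]=\tfrac12[c+\lambda,c+\lambda],
\]
which uses that $c$ and $\lambda$ are odd in Lie degree five, so $[c,\lambda]=[\lambda,c]$. This basis change is an automorphism of $L_Y$, so the two twelve-cells may be taken with attaching classes $\theta_0$ and $\theta_1$ in \eqref{eq:top-cell-attachments}.

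Finally, to obtain an ordinary complex, I would choose integral representatives of nonzero multiples of $\theta_0$ and $\theta_1$ in $\pi_{11}(Y_8)$. Rescaling an attaching map by a nonzero integer does not change the rational homotopy type of the complex, since the corresponding Lie generator is simply rescaled. Attaching the two cells to $Y_8$ gives $Y_{\mathrm{fin}}$. Its Quillen model is $L_Y$, so $(Y_{\mathrm{fin}})_\Q\simeq Y$.

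The main obstacle I expect is sign and normalization bookkeeping. I need to make sure that the cobar conventions, the factor $\tfrac12$ for Lie squares, and the identification of Lie brackets with Whitehead brackets, which can differ by signs, are consistent. I also need the identification of $\xi$ with $[\alpha,e_0]$ to be compatible with its coming from the first $\mathbb{CP}^2$ factor. I would check this by restricting to the sub-Lie algebra $\mathbb L(\alpha,e_0)$, which models $\mathbb{CP}^2$, where $[\alpha,e_0]$ generates $\pi_5$.
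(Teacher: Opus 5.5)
Your proposal is correct and follows the paper's proof essentially step by step. You model $Y_4$ and $Y_8$ by the sub-Lie algebras on $\alpha,\beta,e_i$ and then $c,f_i$, represent $\lambda$ by $[\beta,[\alpha,e_0]]$, replace $h$ by $g+h$ to get $d(g+h)=\tfrac12[c+\lambda,c+\lambda]$, and realize integral multiples of the attaching classes. The only difference is that you obtain the model of $Y_4$ from formality and the Lie cobar construction rather than from the Hopf and Whitehead attaching maps, which is immaterial. The sign bookkeeping you worry about is harmless: a sign change in $\lambda$ is absorbed by the automorphism $\beta\mapsto-\beta$, $e_1\mapsto-e_1$, $f_1\mapsto-f_1$ of $L_Y$, and a common sign or scalar on the two top differentials is absorbed by rescaling the top generators.
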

\begin{proof}
The standard CW structure on $Y_4$ has two two-dimensional cells and three four-dimensional cells. The latter attach by the two Hopf maps and the Whitehead product between the two sphere inclusions. Its rational Lie model is generated by $\alpha,\beta,e_0,e_1,e_2$, with differentials as in \eqref{eq:ten-generator-Lie}. The pushout in \eqref{eq:cell-skeleta} adds a trivially attached six-dimensional cell and two eight-dimensional cells with attaching maps $[\alpha,c]$ and $[\beta,c]$. Hence its Lie model is precisely the subalgebra of $L_Y$ generated by $\alpha,\beta,e_i,c,f_i$.

The cycle $[\alpha,e_0]$ has Lie degree four; it is closed because $[\alpha,[\alpha,\alpha]]=0$. To test whether it is a boundary in $\mathbb L(\alpha,e_0)$, we therefore examine Lie degree five. That component is spanned by $[\alpha,[\alpha,e_0]]$, whose differential is zero. Moreover, $[\alpha,e_0]$ is nonzero: its image in the tensor algebra is $\alpha e_0+e_0\alpha$. Thus it is not a boundary and represents a nonzero class in $\pi_5(\mathbb{CP}^2)\otimes\Q$. It can be identified with a rational normalization of the Hopf map $S^5\to\mathbb{CP}^2$. The projection of $Y_4$ onto its first $\mathbb{CP}^2$ factor splits the inclusion of that factor, so the class remains nonzero in $Y_4$. Consequently the geometric class $[\beta,\xi]$ is represented by $[\beta,[\alpha,e_0]]$. The generator $e_0$ itself is not a cycle; the use of the cycle $[\alpha,e_0]$ is essential in this interpretation.

Since $c$ and $\lambda$ both have Lie degree five, their bracket is symmetric. Replacing the top generator $h$ by $g+h$ in \eqref{eq:ten-generator-Lie} gives
\[
 dg=\tfrac12[c,c],\qquad
 d(g+h)=\tfrac12[c+\lambda,c+\lambda].
\]
Thus the two rational cell attachments in \eqref{eq:top-cell-attachments} produce the model $L_Y$, proving the claimed rational equivalence.

To obtain an ordinary finite CW complex, choose positive integers $N_0,N_1$ and maps $\theta'_i:S^{11}\to Y_8$ whose rational homotopy classes are $N_i\theta_i$. Such choices exist because $\pi_{11}(Y_8)\otimes\Q$ is the rationalization of $\pi_{11}(Y_8)$. Attach two ordinary twelve-dimensional cells along $\theta'_0,\theta'_1$. Their rational Lie differentials are $N_0\theta_0,N_1\theta_1$; rescaling each new generator by $N_i^{-1}$ gives the preceding rational model. The resulting $Y_{\mathrm{fin}}$ therefore has rationalization $Y$ and exactly the cells in \eqref{eq:cell-count}. No cell occurs in an odd dimension, so the cellular boundary is zero, consistently with Corollary~\ref{cor:cohomology}.
\end{proof}

In the cellular basis $g,g+h$, the projection of Proposition~\ref{prop:Quillen-splitting} satisfies
\[
 \pi(g)=g,\qquad \pi(g+h)=g.
\]
Thus both upper attaching classes project to $\tfrac12[c,c]$. In the original basis $g,h$, the same projection fixes $g$ and annihilates $h$. The submodel $L_X=\mathbb L(c,g)$ corresponds to the rational subcomplex consisting of the six-dimensional sphere and the first twelve-dimensional cell. The inclusion, projection and splitting are identified here in the rational homotopy category.

\begin{corollary}\label{cor:target-hyperbolic}
The rationalization of $Y_{\mathrm{fin}}$ is hyperbolic. In particular, every minimal Sullivan model of $Y$ has infinitely many generators, although $Y$ admits the ten-generator minimal Quillen model of Proposition~\ref{prop:finite-Lie-model}.
\end{corollary}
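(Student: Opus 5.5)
To prove that $Y$ is rationally hyperbolic, I plan to show that the rational homotopy Lie algebra $L=H(L_Y)$ contains a free Lie algebra on two generators. By the dichotomy theorem, $Y$ is then not elliptic. Since $\dim H^*(Y;\Q)<\infty$, this is exactly hyperbolicity. The second assertion then follows: a minimal Sullivan model with finitely many generators would have $\dim\pi_*(Y)\otimes\Q<\infty$, hence $Y$ would be elliptic.

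Rather than computing $H(L_Y)$ directly, I would use a retract argument from Proposition~\ref{prop:finite-Lie-model}. Consider the morphism of differential Lie algebras
\[
 \kappa:L_Y\longrightarrow L'=\bigl(\mathbb L(\alpha,\beta,e_0,e_1,e_2),d\bigr)
\]
that kills $c,f_0,f_1,g,h$ and is the identity on the remaining generators. It is a dg Lie map, since every killed generator has differential in the ideal generated by $c$. Indeed $dh=[c,\lambda]+\tfrac12[\lambda,\lambda]$ maps to $\tfrac12[\lambda,\lambda]$. Here I have to be careful: this is nonzero, so $h$ cannot simply be killed. I would instead use the sub-dg Lie algebra $L'\subset L_Y$ generated by $\alpha,\beta,e_i$, which is closed under $d$. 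The projection $L_Y\to L'$ sending $c,f_i,g,h\mapsto 0$ then fails to be a chain map only because of $h$.

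To repair this, I would project onto the quotient by the ideal generated by $c,f_i,g$. The quotient is $L''=\mathbb L(\alpha,\beta,e_i,h)$ with $dh=\tfrac12[\lambda,\lambda]$, and the composite $L'\hookrightarrow L_Y\to L''$ is the obvious inclusion. Now $L'$ is the Quillen model of $Y_4=(\mathbb{CP}^2\times\mathbb{CP}^2)^{(4)}$. Its homology contains the classes $\xi=[\alpha,e_0]$ and $\xi'=[\beta,e_2]$, which come from the two $\mathbb{CP}^2$ factors. $Y_4$ is the wedge-like 4-skeleton and is already hyperbolic. I would check that $\xi$ and $\beta$ (or $\xi'$) generate a free sub Lie algebra of $H(L')$. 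This can be done by mapping further to $\mathbb{CP}^2\vee\mathbb{CP}^2$: kill $e_1$ after adding $[\alpha,\beta]$ as a cycle relation, or instead use the known rational equivalence of $Y_4$ with a space whose loop homology is a free product. Both $\mathbb{CP}^2$ factors contribute elements of $\pi_5$, and the brackets of the two odd-degree classes $\xi,\xi'$ generate freely.

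The main obstacle is transporting this free subalgebra into $H(L_Y)$. The plan is to exhibit a dg Lie retraction $L_Y\to M$ onto a hyperbolic model $M$, so that $H(M)$ is a retract of $H(L_Y)$ and hence $\dim\pi_*(Y)\otimes\Q$ grows exponentially. The cleanest candidate I would try first is $M=\mathbb L(\alpha,\beta,e_0,e_1,e_2)$, together with the extra relation that kills $\tfrac12[\lambda,\lambda]$. Failing that, I would use the sequence of ranks of $\pi_*(Y)\otimes\Q$ directly. The Sullivan model $\Ahat$ has, by construction, at least one new generator in each degree $q-1$ for which $H^q(A^{(q-1)})\neq0$. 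I would show these are nonzero for infinitely many $q$, for instance via the exponential growth forced by $\dim H^{\mathrm{even}}>\dim H^{\mathrm{odd}}$ failing the ellipticity constraints. The simplest route uses a cohomological criterion: an elliptic space has $\chi_\pi\le0$ and, when $\chi_\pi=0$, its cohomology satisfies Poincaré duality. Here $H^*(Y)$, with Poincaré series $1+2z^2+3z^4+z^6+2z^8+2z^{12}$ (Corollary~\ref{cor:cohomology}), has a two-dimensional top group in degree $12$ and so is not a Poincaré duality algebra. An elliptic space with $\chi=\dim H^*=11>0$ would have $\chi_\pi=0$ and hence satisfy Poincaré duality. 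This contradiction gives hyperbolicity immediately, and I would present this duality argument as the actual proof, keeping the Lie algebra discussion only as a remark.
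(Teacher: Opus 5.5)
Your final argument is essentially the paper's: the top nonzero group $H^{12}(Y;\Q)$ is two-dimensional, so $H^*(Y;\Q)$ is not a Poincar\'e duality algebra, whereas simply connected elliptic spaces have Poincar\'e duality cohomology; since $\dim H^*(Y;\Q)<\infty$, this forces $\pi_*(Y)\otimes\Q$ to be infinite dimensional, and hence every minimal Sullivan model has infinitely many generators. Your detour through $\chi=11>0\Rightarrow\chi_\pi=0$ is valid but unnecessary, since the paper cites Poincar\'e duality for all elliptic spaces directly, and you are right to discard the Lie-algebra sketch, which as written constructs no retraction and proves nothing.
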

\begin{proof}
The cohomology of $Y$ is finite dimensional, vanishes above degree twelve, and has dimension two in degree twelve. It cannot satisfy Poincar\'e duality: a connected Poincar\'e duality algebra has a one-dimensional top component. A rationally elliptic simply connected space has Poincar\'e duality cohomology \cite[Chapter~32]{FHT2001}. Thus $Y$ is not elliptic. Since its total rational cohomology is finite dimensional, this means that $\pi_*(Y)\otimes\Q$ is infinite dimensional, so $Y$ is hyperbolic. For a minimal Sullivan model $(\Lambda V,D)$, $V^n\cong\operatorname{Hom}(\pi_n(Y),\Q)$; this proves the final assertion. The generators of a minimal Quillen model instead correspond to $s^{-1}\widetilde H_*(Y;\Q)$ \cite[Section~1.2.2, p.~34]{BFMT2020}, so there is no conflict with their finite number.
\end{proof}


\clearpage
\begin{thebibliography}{99}
\raggedright
\bibitem{BFMT2020}
U.~Buijs, Y.~Félix, A.~Murillo and D.~Tanré, \emph{Lie Models in Topology},
Progress in Mathematics, vol.~335, Birkhäuser/Springer, Cham, 2020.
\href{https://doi.org/10.1007/978-3-030-54430-0}{doi:10.1007/978-3-030-54430-0}.

\bibitem{CV2017}
J.~G. Carrasquel-Vera, \emph{The rational sectional category of certain maps},
Ann. Sc. Norm. Super. Pisa Cl. Sci. (5) \textbf{17} (2017), no.~2, 805--813.
\href{https://doi.org/10.2422/2036-2145.201510_020}{doi:10.2422/2036-2145.201510\_020}.

\bibitem{FGKV2006}
L.~Fernández Suárez, P.~Ghienne, T.~Kahl and L.~Vandembroucq,
\emph{Joins of DGA modules and sectional category},
Algebr. Geom. Topol. \textbf{6} (2006), 119--144.
\href{https://doi.org/10.2140/agt.2006.6.119}{doi:10.2140/agt.2006.6.119}.

\bibitem{FHT2001}
Y.~Félix, S.~Halperin and J.-C. Thomas, \emph{Rational Homotopy Theory},
Graduate Texts in Mathematics, vol.~205, Springer-Verlag, New York, 2001.
\href{https://doi.org/10.1007/978-1-4613-0105-9}{doi:10.1007/978-1-4613-0105-9}.

\bibitem{Hess1991}
K.~P. Hess, \emph{A proof of Ganea's conjecture for rational spaces},
Topology \textbf{30} (1991), no.~2, 205--214.
\href{https://doi.org/10.1016/0040-9383(91)90006-P}{doi:10.1016/0040-9383(91)90006-P}.

\bibitem{Schwarz1966}
A.~S. Schwarz, \emph{The genus of a fiber space},
Amer. Math. Soc. Transl. (2) \textbf{55} (1966), 49--140.

\bibitem{Stanley2000}
D.~Stanley, \emph{The sectional category of spherical fibrations},
Proc. Amer. Math. Soc. \textbf{128} (2000), no.~10, 3137--3143.
\href{https://doi.org/10.1090/S0002-9939-00-05468-X}{doi:10.1090/S0002-9939-00-05468-X}.

\bibitem{Sullivan1977}
D.~Sullivan, \emph{Infinitesimal computations in topology},
Publ. Math. Inst. Hautes Études Sci. \textbf{47} (1977), 269--331.
\href{https://doi.org/10.1007/BF02684341}{doi:10.1007/BF02684341}.
\end{thebibliography}
\end{document}